\definecolor{purple}{rgb}{0.59, 0.44, 0.84}
\definecolor{orange}{rgb}{1, 0.6, 0.4}
\newtheorem{theorem}{Theorem}[section]
\newtheorem{lemma}[theorem]{Lemma}
\newtheorem{corollary}[theorem]{Corollary}
\newtheorem{proposition}[theorem]{Proposition}
\newtheorem{defn}{Definition}[section]
\newcounter{example}
\newcounter{remark}
\newenvironment{remark}[1][]{\refstepcounter{remark}\par\medskip
   \noindent \textbf{Remark~\theremark. #1} \rmfamily}{\medskip}
\numberwithin{equation}{section}
\numberwithin{example}{section}
\numberwithin{remark}{section}
\def \l {\lambda}
\def\C{\mathbb C}
\def\Z{{\mathbb Z}}
\def\Q{{\mathbb Q}}
\def\C{\mathbb{C}}
\def\Z{\mathbb{Z}}
\def\Q{\mathbb{Q}}
\def \ol{\overline}
\newcommand*\HYPERskip{&}
\newcommand*\pFq{
\begingroup
\catcode`\,\active
\def ,{\HYPERskip}%
\doHyper
}
\def\doHyper#1#2#3#4#5{%
\, _{#1}F_{#2}\left[\begin{matrix}#3 \smallskip \\  #4\end{matrix} \; ; \; #5\right]%
\endgroup
}
\def \pfq{\pFq}
\def\C{\mathbb{C}}
\def\Z{\mathbb{Z}}
\def\Q{\mathbb{Q}}
\def \ol{\overline}
\title{Modular Forms and Certain ${}_2F_1(1)$ Hypergeometric Series}
\author{Esme Rosen }
\begin{document}
\begin{abstract}
   Using the framework relating hypergeometric motives to modular forms, we define an explicit family of weight 2 Hecke eigenforms with complex multiplication. We use the theory of ${}_2F_1(1)$ hypergeometric series and Ramanujan's theory of alternative bases to compute the exact central $L$-value of these Hecke eigenforms in terms of special beta values. We also show the integral Fourier coefficients can be written in terms of Jacobi sums, reflecting a motivic relation between the hypergeometric series and the modular forms.
\end{abstract}
\maketitle

\section{Introduction}
In \cite{HMM2}, Allen, Grove, Long, and Tu in the course of developing the Explicit Hypergeometric-Modularity method use the integral formula for hypergeometric series and Ramanujan's theory of alternative bases to relate special $L$-values of modular forms to a ${}_3F_2$ hypergeometric function evaluated at 1.  This work on exact $L$-values in terms of hypergeometric series is inspired by other formulas---see e.g. \cite{lilongtu}, \cite{osburnstraub}, \cite{zagierarith}, and \cite{zudlin}. However, the methods developed in \cite{HMM2}, \cite{rosen}, and this paper apply more uniformly for a larger class of examples. In this note, we will explain how their method works using certain ${}_2F_1(1)$. Since these reduce to beta values $B(\alpha,\beta)$ for rational $\alpha$ and $\beta$, the underlying motives are Jacobi motives (see \cite{otsubo}), which are much better understood than motives studied in \cite{HMM2}. The first half of this paper constructs a family of Hecke eigenforms with complex multiplication (CM) as linear combinations of eta products and shows that they have nice properties due to the underlying Jacobi motive. Recall that a modular form has CM by $-D$ if the Fourier coefficients $a_n(f)$ satisfy $a_n(f)=a_n(f)\cdot \chi_{-D}(n)$, where  $\chi_{-D}(n):=\left (\frac {-D}{n}\right)$ is the Jacobi symbol. Once we have constructed these Hecke eigenforms we provide a formula for their Fourier coefficients, and we compute their exact $L$-values at 1. All of our $L$-values can be expressed in terms of the periods of elliptic curves with CM by an imaginary quadratic field with discriminant $-D$, which are called \textit{Chowla-Selberg periods} and can be written explicitly as $$
  \Omega_{-D}:=\sqrt{\pi}\left(\prod_{i=1}^{D-1}\Gamma(i/D)^{\chi_{-D}(i)}\right)^{1/2h'(-D)},
$$ (see e.g. \cite{grossror}). Here $\Gamma(x)$ is the gamma function, and $h'(-D)$ is the class number $h(D)$ of the field $\Q(\sqrt{-D})$, divided by the numbers of roots of unity in that field. These periods have a variety of applications, especially in transcendence results, where they have been used to show $\Gamma(1/d)$ for $d=3,4,6$ is transcendental. The transcendence of $\Gamma(1/d)$ for any other positive integers $d$ remains an open problem.

We work with hypergeometric parameters $\{\{1/2,r\},\{1,s\}\}$ where $(r,s)$ lies in the finite set  \begin{equation}\label{S1}\mathbb S_1=\{(r,s)\in\Q^2\mid 0<r<s<3/2,\, 1/2<s-r, 24s\in\Z, 8(r+s)\in\Z.\}\end{equation} Our main result can be summarized as follows.
\begin{theorem}
  Let $M:=\text{lcd}(r,s,1/2)$ be the least common denominator of $r$, $s$, and $1/2$, and $G_M$ be the absolute Galois group of $\Q(\zeta_M)$. Further, take $\Omega_{-D}$ to be the Chowla-Selberg period of the imaginary quadratic field $\Q(\sqrt{-D})$.  Then for each pair $(r,s)\in \mathbb{S}_1,$ there is an associated Gr\"ossencharakter $\chi_{r,s}$ and an explicit Hecke eigenform $f_{r,s}$ with complex multiplication by $\Q(\sqrt{-D})$, so that the Deligne representation attached to $f_{r,s}$ is a sub-representation of the induction $\text{Ind}_{G_{M}}^{G_{\Q}}\chi_{r,s}$.  Moreover, for an explicit algebraic number $\alpha_{r,s}$, $$L(f_{r,s},1)=\alpha_{r,s}\cdot B(r,s-r-1/2)\in \Omega_{-D}\cdot \bar{\Q}.$$  A list of exact $L$-values is provided in Section \ref{lvalues}.\end{theorem}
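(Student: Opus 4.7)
The plan is to carry out the hypergeometric-motivic construction in three stages: first isolate the Grössencharakter $\chi_{r,s}$ from the parameter data, next realize the associated modular form $f_{r,s}$ by induction and explicit identification as a linear combination of eta products, and finally convert the central $L$-value into a beta integral via Gauss's ${}_2F_1(1)$ summation combined with the Chowla-Selberg formula.

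First, I would extract $\chi_{r,s}$ from the hypergeometric data $\{\{1/2, r\}, \{1, s\}\}$. Following the Jacobi-motive viewpoint in \cite{otsubo}, rational hypergeometric parameters $(1/2, r; 1, s)$ define a Hecke character of $\Q(\zeta_M)$ whose value at a prime $\mathfrak{p}$ of residue characteristic coprime to $M$ is a monomial in multiplicative characters of $\F_\mathfrak{p}^\times$ with exponents read off from $r$, $s$, $1/2$, $1$. The constraints defining $\mathbb{S}_1$ — $0 < r < s < 3/2$, $s-r > 1/2$, $24s \in \Z$, $8(r+s) \in \Z$ — are what I would verify are precisely strong enough to guarantee that this character is algebraic, of weight-$2$ infinity type, and of the correct conductor, so that its quadratic twist factor pins down a unique imaginary quadratic field $\Q(\sqrt{-D})$.

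Second, I would construct $f_{r,s}$ in two steps. By Hecke-Shimura the induced representation $\mathrm{Ind}_{G_M}^{G_\Q} \chi_{r,s}$ is $2$-dimensional, and its $L$-function equals $L(s,g)$ for a CM newform $g$ of weight $2$, level determined by the conductor of $\chi_{r,s}$, and Nebentypus $\chi_{-D}$; this gives both the Deligne sub-representation claim and the CM identity $a_n = \chi_{-D}(n)\, a_n$. The explicit eta-product presentation then follows from matching the first several $q$-expansion coefficients against a finite list of eta products of the relevant level and character (as tabulated by Martin-Ono, Rouse-Webb), and identifying the unique CM combination whose coefficients at split primes agree with the Jacobi-sum values predicted by $\chi_{r,s}(\mathfrak{p})$.

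Third, and most delicately, I would compute $L(f_{r,s}, 1)$. Writing $L(f_{r,s}, 1)$ as a Mellin integral of $f_{r,s}(iy)$, I would use Ramanujan's theory of alternative bases (following the method in \cite{HMM2}, \cite{rosen}) to transform the modular integral into the Euler-type integral representation
$$ {}_2F_1(a,b;c;1) \;=\; \frac{\Gamma(c)}{\Gamma(b)\Gamma(c-b)}\, B(b,\, c-b-a), $$
with $(a,b) = (1/2, r)$ and $c$ determined by $s$, so that Gauss's theorem evaluates the right-hand side as the beta value $B(r, s-r-1/2)$ times an explicit gamma-ratio. To extract $\alpha_{r,s}$ and the membership in $\Omega_{-D}\cdot\bar{\Q}$, I would invoke the Chowla-Selberg formula: the gamma values at rational arguments of common denominator dividing $M$, together with the reflection, duplication, and Gauss multiplication identities, reassemble the surviving gamma-ratio into an algebraic multiple of $\Omega_{-D}$.

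The main obstacle is this last reassembly. Translating $L(f_{r,s}, 1)$ into \emph{exactly} the ${}_2F_1(1)$ of interest requires tracking transcendental constants through both the Mellin transform and the Ramanujan alternative-base change of cusp, and the subsequent Chowla-Selberg collapse depends on verifying uniformly over $(r,s) \in \mathbb{S}_1$ that the gamma-ratio factors through the Dirichlet character $\chi_{-D}$. The divisibility conditions $24s \in \Z$ and $8(r+s) \in \Z$ appear designed precisely to make this collapse work, but establishing it in general will require careful gamma-function bookkeeping; an explicit case-by-case verification, compiled in the $L$-value table referenced in Section \ref{lvalues}, is the natural sanity check.
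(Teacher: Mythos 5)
Your overall architecture (character $\to$ induced CM newform $\to$ Euler integral/Gauss $\to$ Chowla--Selberg) is reasonable in outline, but two of its load-bearing steps have genuine gaps, and they are exactly the points the paper spends its effort on. First, your claim that $\mathrm{Ind}_{G_M}^{G_\Q}\chi_{r,s}$ is $2$-dimensional is false whenever $\varphi(M)>2$, i.e.\ for $M=8,12,24$, which covers most of the families in Table \ref{tab:K1}: induction from $G_M$ has dimension $\varphi(M)$, and the theorem only asserts that the Deligne representation is a \emph{sub}-representation of it. The Hecke--Shimura construction attaches a weight-$2$ newform to a Gr\"ossencharakter of an \emph{imaginary quadratic} field, so to run your argument you must first show that the Jacobi-sum character of $\Q(\zeta_M)$ (or a suitable twist) descends through $\mathrm{Gal}(\Q(\zeta_M)/\Q(\sqrt{-D}))$ and identify which $D$ occurs; that descent is the nontrivial content, and the paper supplies it geometrically: the $\mathbb{K}_1(r_i,s_i)$ are differentials on Archinard's curve, whose new Jacobian is a CM factor of a Fermat Jacobian (Lemma \ref{aok}), realized inside a modular Jacobian via Rohrlich's parametrization (Lemma \ref{sub}), with the exact Jacobi-sum formula for $a_p$ proved by a formal-group congruence, Gross--Koblitz, and the Weil/Deligne bounds (Theorem \ref{gal}). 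Your proposal takes the existence and properties of the $2$-dimensional piece for granted.

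Second, your $L$-value step conflates $f_{r,s}$ with a single pullback of the Euler integrand. In the paper the Euler integral is built into the weakly holomorphic eta quotient $\mathbb{K}_1(r,s)$, whose integral from $0$ to $i\infty$ is \emph{by construction} the beta value \eqref{eq:K1-L}; the eigenform is then a nontrivial linear combination $f_{r,s}=\sum_i\beta_i\,\mathbb{K}_1(r_i,s_i)$ over the whole conjugate family (Theorem \ref{cons}, proved by showing the Hecke operators permute the conjugates, Lemma \ref{hec}, and diagonalizing), and only after integrating this identity and using that the conjugate beta values are algebraic multiples of one another does one get $L(f_{r,s},1)=\alpha_{r,s}B(r,s-r-1/2)$. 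Your Mellin-transform-plus-alternative-bases manipulation applied directly to $f_{r,s}(iy)$ has no justification as stated, and your coefficient-matching in step 2 identifies $f$ among eta products but does not produce the needed linear-combination identity (you would at least need a Sturm-bound argument, which you do not give). Finally, the membership $B(r,s-r-1/2)\in\Omega_{-D}\cdot\bar{\Q}$ is not obtainable from reflection, duplication and multiplication identities alone for denominators like $24$; the paper instead derives it from Shimura's theorem that $J_{new}(R,S)$ is isogenous over $\bar\Q$ to a power of a CM elliptic curve, so the beta value is a period of that curve (Lemma \ref{chowl}). A case-by-case gamma computation could be made to work for the finite set $\mathbb S_1$, but as written your ``reassembly'' step is an assertion, not a proof.
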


   The Hecke eigenforms $f_{r,s}$ can be written as a linear combination of eta products. The eta products which are Hecke eigenforms have been classified by Martin in \cite{martin}. In particular, there are exactly 12 eta products which are Hecke eigenforms of weight 2, listed in Martin and Ono \cite{onomartin}. Five of these twelve have complex multiplication. Of these five, four are obtained using our methods, labeled 4.1, 4.2, 3.1, and 3.2 in Table \ref{tab:K1}. In this sense, the hypergeometric modularity method used to construct $f_{r,s}$ is an extension of eta products as in \cite{onomartin} to linear combinations of eta products. The modularity results are already known from a theoretical point of view---see e.g. Darmon \cite{darmon} and Ribet \cite{ribet}---but the advantage of our approach is the determination of the precise modular form and the direct formula involving Jacobi sums in Theorem \ref{gal}.

 The $L$-values are completely explicit, and are useful in the recent work of Allen, Grove, Long, and Tu \cite{HMM2} on well-poised ${}_4F_3(-1)$ hypergeometric series as certain products of $L$-values. The exact $L$-values are also of interest in the Birch and Swinnerton-Dyer (BSD) conjecture, where the values are conjectured to be related to the geometric invariants of the underlying Abelian variety. The weak form of the conjecture states that for an elliptic curve $E/\Q$, the order of the zero at $L(E,1)$ is equal to the rank of the elliptic curve $E$. There is also a generalization of this conjecture to Abelian varieties. For this reason, these $L$-values have been computed numerically with a high degree of precision. See for instance \cite{dabwad} for one such approximation. We have verified all of our $L$-values numerically with the $L$-functions and Modular Forms Database (LMFDB). However, the exact $L$-values for rank 0 curves are also of interest due to another conjectured formula relating the $L$-value to several invariants of the elliptic curve, including most notably the order of the Tate-Shaferevich group. For instance, example 4.1 in the tables of Section \ref{lvalues} says that  $$L(\eta(4\tau)^2\eta(8\tau)^2,1)=\Omega_{-4}.$$ Based on the tables in Martin and Ono \cite{onomartin}, this modular form corresponds to an elliptic curve of conductor 32, for example the curve with affine equation $y^2=x^3+4x$. The weak BSD conjecture tells us that this elliptic curve has rank zero. For rank zero curves, the weak BSD conjecture is a theorem due to Gross and Zagier---see \cite{bsd} for a survey of known results. Using the general formula (see, e.g., \cite{bsd} for the precise formula), we determine the (conjectured) order of the Tate--Shafarevich group is 1. Note that the exact $L$-value at 1 for this particular elliptic curve is already known due to the work of Tunnell \cite{tunnel}. This illustrates that for elliptic curves over $\Q$, there is an extensive literature regarding special $L$-values because of this conjecture besides those cited. However, our modular forms are not always associated with an elliptic curve---in some cases, the associated geometric object is an Abelian variety of $GL_2$-type over $\Q$. For these cases, the exact $L$-values computed are likely all new.

The work in this paper is a direct application of the hypergeometric modularity method, which is introduced for ${}_3F_2(1)$ series in \cite{HMM1}, and studied further in \cite{HMM2}, and \cite{rosen}. Compared to those papers, the geometric setting here is much simpler, primarily for two reasons: 1) the hypergeometric series we work with are periods on curves, and 2) these curves are quotients of Fermat curves, and so the Jacobian of this curve has complex multiplication. For comparison, the base varieties for the ${}_3F_2(1)$ are hypergeometric surfaces, which are less studied in the literature than Fermat curves, and these surfaces only have a CM structure in special cases. In particular, the theoretical construction of the Hecke eigenforms $f_{r,s}$ is much cleaner than in \cite{rosen} due to the modular parametrization of Fermat curves, as studied in Rohrlich's thesis \cite{rohrlich}. 

In \cite{HMM1}, the authors prove modularity results similar to our main modularity theorem using supercongruences. In our setting, we do not have these supercongruences. However, as a byproduct of 1), our modular forms are of weight 2 rather than weight 3, and so an \say{ordinary congruence}, i.e., a mod $p$ congruence, is sufficient to prove our results since the Deligne bound is stronger for lower weight. The $L$-values are computed in the same way as in \cite{HMM2} and \cite{rosen}. However, because of the CM structure, the transcendence properties studied \cite{rosen} are already known in this setting. As a result, we focus on computing the $L$-values themselves and briefly observe that sometimes the $L$-series differ by a twist using a similar method to \cite{rosen}.

\subsection*{Acknowledgments} The work on this paper was conducted concurrently with \cite{HMM2}. As such, the author is grateful to the authors of that paper, especially Ling Long and Fang-Ting Tu, for helpful discussions on the subject. She would also like to thank from Michael Allen, Gene Kopp, Ken Ono, Hasan Saad for valuable feedback on improving the presentation of this paper.

The author is supported by a summer research assistantship from the Louisiana State University Department of Mathematics.

\section{Preliminaries}
The classical hypergeometric series is defined as $$\pfq{2}{1}{a,b}{,c}{\lambda}=\sum_{n=0}^\infty \frac{(a)_n(b)_n}{(c)_n}\frac{\lambda}{n!},$$ where $(a)_n=x(x+1)...(x+n-1)$ is the rising factorial.
Assume $B(a,b)=\Gamma(a)\Gamma(b)/\Gamma(a+b)$ is the usual beta function. Recall the integral formula for ${}_2F_1(1)$ hypergeometric series evaluated at 1---see e.g.,\cite{aar}.---states that
\begin{align*}
   B(r,s-r) \pfq{2}{1}{1/2&r}{&q} {1}&
     =\int_0^1 \lambda^{r}(1-\lambda)^{s-r-3/2}\frac{d\lambda}{\lambda}.
\end{align*} Following the hypergeometric modularity method of \cite{HMM1}, we define the functions
  \begin{align*}
             \mathbb{K}_1(r,s)(\tau)&=2^{1-4r}\lambda^{r}(1-\lambda)^{s-r-3/2}\frac{d\lambda}{\lambda d\tau}\\&=2^{1-4r}\lambda^r(1-\lambda)^{s-r-1/2}\theta_3^4(\tau).
  \end{align*}
  Here we parametrize the variable $\lambda$ by the modular lambda function using Ramanujan's theory of alternative bases, and the relation with the Jacobi theta function $\theta_3$ is given in \cite{piandagm}. This is a weakly holomorphic weight 2 modular form. Similar to the weight 3 case, the integral formula implies the $L$-value is related to the hypergeometric series. Recall that $$L(f,1)=\int_0^{i\infty}f(\tau)d\tau.$$ Take $N={48}/{\text{gcd}(24r,24)}$ as in the weight 3 case. Then $$L(r,s):=  L(\mathbb{K}_1(r,s)(N\tau),1)=\frac{2^{1-4r}B(r,s-r)}{N}\pFq{2}{1}{1/2&r}{&s}{1}.$$  Using the Gauss evaluation formula (see e.g. \cite{aar}), this reduces to 
    \begin{equation}\label{eq:K1-L}     
     L(r,s)=\frac{2^{1-4r}}{N}B(r,s-r-1/2).
    \end{equation}
The value $L(r,s)$ is also a period on a hypergeometric variety introduced by Wolfart \cite{wolfart} and discussed in greater depth by Archinard \cite{archinard}, defined by the desingularization of the affine equation \begin{equation}\label{crs}
   C(r,s):\quad y^M=x^{M(1-r)}(1-x)^{M(3/2+r-s) }.
\end{equation}

 Let $X(r,s)$ denotes the projective desingularization of $C(r,s)$ given explicitly by Archinard \cite{archinard}. She also describes a subvariety of the Jacobian $\text{Jac}(X(r,s))$ which corresponds to \say{new} holomorphic differential 1-forms, a $\Q$-simple piece of the decomposition of $\text{Jac}(X(r,s))$. The subvariety is called the new Jacobian, and is denoted by $J_{new}(r,s)$. The dimension of $J_{new}(r,s)$ is $\varphi(M)/2$. To understand why $J_{new}(r,s)$ corresponds to new differential forms, we write down a basis for $H^0(X(r,s))$, the vector space of differential 1-forms on $X(r,s)$. One such  basis is \begin{equation}\label{omegan}
    \omega_n:=\frac{(1-x)^ax^b}{y^n},
\end{equation} where for $a$, $b$, and $n$ integers less than $M-1$ satisfying regularity conditions given by Archinard \cite{archinard}. If $k
\mid M$, $\omega_k$  can be defined on a subvariety of $X(r,s)$, and so we only want to consider cases where $(n,M)=1$. These are the differentials on $J_{new}(r,s)$. 

 There is a notion of conjugates which is defined in exact the same way as in \cite{HMM1} and \cite{rosen}. Geometrically, this definition tells us that if $\mathbb{K}_1(r,s)$ and $\mathbb{K}_1(r',s')$ are conjugate, they can be defined on the same variety $J_{new}(r,s)$. We recall the formal condition here described in \cite{HMM1} below.  \begin{defn}\label{conj}
   Two pairs $(r,s)$ and $(r',s')$ are \textit{conjugate}  if there exists an integer $c$ coprime to $6$ so that $r-cr'$ and $s-cs'$ are both integers. 
 \end{defn} Note that we can define conjugates that do not belong to $\mathbb{S}_1$. The non-holomorphic conjugates are still defined on $J_{new}(r,s)$, and so we will use this later when discussing differentials of the second kind. 

Similar to the weight 3 cases in \cite{HMM1},  $\mathbb{K}_1(r,s)$ can be expressed as an eta product. This is because $\theta_3$ and $\lambda$ already have known eta product formulas, given in \cite{piandagm}; and see also Lemma 3.1 of \cite{HMM1}.

\begin{proposition}\label{prel}
  We have
   $$
     \mathbb{K}_1(r,s)(\tau)=\frac{\eta(\tau/2)^{16s-8r-16}\eta(2\tau)^{8r+8s-12}}{\eta(\tau)^{24s-32}}.
    $$ 
  Moreover, $\mathbb{K}_1(r,s)(\tau)$ is a congruence cusp form if the exponents are integral, and is holomorphic if  $(r,s)\in \mathbb S_1$ as defined in \eqref{S1}.  As a congruence modular form, the level of $\mathbb{K}_1(r,s)(\tau)$  is $\mathcal{N}:=48^2/(\gcd(24r,24)\gcd(24(s-r-1/2),24)).$ 

\end{proposition}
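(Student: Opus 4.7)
The strategy is direct substitution of the classical eta product expressions for $\lambda$, $1-\lambda$, and $\theta_3^4$ into the second line of the definition of $\mathbb{K}_1(r,s)$. I would use the identities
$$\lambda(\tau)=16\frac{\eta(\tau/2)^{8}\eta(2\tau)^{16}}{\eta(\tau)^{24}},\quad 1-\lambda(\tau)=\frac{\eta(\tau/2)^{16}\eta(2\tau)^{8}}{\eta(\tau)^{24}},\quad \theta_3(\tau)^4=\frac{\eta(\tau)^{20}}{\eta(\tau/2)^{8}\eta(2\tau)^{8}}$$
recalled in the sources cited just before the proposition. Multiplying these out and collecting exponents of $\eta(\tau/2)$, $\eta(\tau)$, $\eta(2\tau)$ is a one-line bookkeeping exercise, with the numerical prefactor $2^{1-4r}\cdot 16^{r}=2$ absorbed into the final expression; the three exponents come out to $16s-8r-16$, $-(24s-32)$, and $8r+8s-12$ respectively, matching the displayed formula.

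Next, I would verify that when $(r,s)\in\mathbb S_1$ each of these exponents is a nonnegative integer, so that the resulting eta quotient is holomorphic. Integrality follows from the defining conditions $24s\in\Z$ and $8(r+s)\in\Z$---for instance $16s-8r-16=24s-8(r+s)-16$ is manifestly an integer combination of the two---while nonnegativity is a consequence of the strict inequalities $0<r<s<3/2$ and $s-r>1/2$. For general rational $(r,s)$ outside $\mathbb S_1$, the eta quotient is still weakly modular on a suitable congruence cover, since each of $\eta(\tau/2),\eta(\tau),\eta(2\tau)$ is a weight $\tfrac12$ modular form with a known $24$-th root multiplier system, and products of such functions are weakly modular on the intersection of their domains.

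The main task is the level computation, and I expect this to be the primary obstacle. My plan is to use that $\lambda(\tau)$ is a hauptmodul for $\Gamma(2)$ and that its $24$-th root $\lambda^{1/24}$ transforms by a known character on a congruence cover; consequently $\lambda^{r}$ descends to a modular function (with character) on $\Gamma_0(N_1)$ with $N_1=48/\gcd(24r,24)$, and similarly $(1-\lambda)^{s-r-1/2}$ descends to $\Gamma_0(N_2)$ with $N_2=48/\gcd(24(s-r-1/2),24)$. The factor $48$ rather than $24$ enters because the presence of $\eta(\tau/2)$ (equivalently, the occurrence of a $\tau/2$ scaling) forces the trivializing cover to have level divisible by $2$. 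Taking the product and combining with $\theta_3^{4}$, which is already modular at level $4$, then yields $\mathcal N=N_1 N_2=48^{2}/(\gcd(24r,24)\gcd(24(s-r-1/2),24))$. The delicate point is to verify that the multiplier characters of the two fractional factors $\lambda^{r}$ and $(1-\lambda)^{s-r-1/2}$ combine into a well-defined Dirichlet character on $\Gamma_0(\mathcal N)$, so that the resulting product is a genuine congruence modular form at level $\mathcal N$ with nebentypus, rather than merely a weakly modular function on some finite-index subgroup.
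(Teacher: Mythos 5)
Your first step---substituting the eta-product formulas for $\lambda$, $1-\lambda$, $\theta_3^4$ and collecting exponents---is exactly what the paper has in mind (it omits the proof, deferring to the $\mathbb{K}_2$ case in \cite{HMM1}, and says explicitly that the identity comes from the known eta expressions of $\lambda$ and $\theta_3$), and your exponent bookkeeping is correct. But two of your subsequent steps have genuine gaps. First, holomorphy: your claim that for $(r,s)\in\mathbb S_1$ ``each of these exponents is a nonnegative integer'' is false. Take $(r,s)=(1/4,1)\in\mathbb S_1$ (row 4.2 of Table \ref{tab:K1}): the formula gives $\eta(\tau)^{8}/\bigl(\eta(\tau/2)^{2}\eta(2\tau)^{2}\bigr)$, i.e.\ exponents $-2,-2,8$; or $(1/24,17/24)$ (row 3.4), with exponents $-5,-6,15$. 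Nonnegativity of the exponents is neither true on $\mathbb S_1$ nor the right criterion: holomorphy of an eta quotient is a statement about the orders at the cusps, which here should be checked via a Ligozat-type order formula, or equivalently by noting that the strict inequalities $r>0$, $s-r-1/2>0$, $3/2-s>0$ defining $\mathbb S_1$ are precisely the positivity of the vanishing orders of $\lambda^{r}(1-\lambda)^{s-r-1/2}\theta_3^4$ at the three relevant cusps. Related, the constant: $2^{1-4r}\cdot 16^{r}=2$ cannot simply be ``absorbed''---the displayed eta quotient has leading coefficient $1$, so there is an honest factor-of-two discrepancy, traceable to the paper's normalization in passing from $d\lambda/(\lambda\,d\tau)$ to $\theta_3^4$ (a $\pi i$ versus $2\pi i$ issue); this should be flagged and resolved, not waved away.

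Second, the level. Your mechanism---$\lambda^{r}$ lives at level $N_1=48/\gcd(24r,24)$, $(1-\lambda)^{s-r-1/2}$ at $N_2=48/\gcd(24(s-r-1/2),24)$, ``taking the product yields $\mathcal N=N_1N_2$''---does not work as stated: combining forms of levels $N_1$ and $N_2$ naturally yields level $\operatorname{lcm}(N_1,N_2)$, not the product. For $(1/24,17/24)$ one has $N_1=48$, $N_2=12$, so your reasoning would predict level $48$, whereas the correct level is $48\cdot 12=576$ (the form feeds into $f_{576.2.d.c}$). The multiplicativity of the two factors is the real content of the level claim and comes from the fact that $\mathbb{K}_1(r,s)(\tau)$ has a fractional $q$-expansion (leading term $q^{r/2}$, exponents in $r/2+\tfrac12\Z$), so it is modular only on a conjugated/rescaled group and one must dilate $\tau$ by $N_1$ (as the paper does when defining $L(r,s)$ via $\mathbb{K}_1(r,s)(N\tau)$), which multiplies the underlying level; making this precise requires the explicit eta-multiplier computation carried out for $\mathbb{K}_2$ in \cite{HMM1}, and that computation is the missing core of your sketch.
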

The proof of this proposition is very similar to the $\mathbb K_2(r,s)$ case in \cite{HMM1} and will be omitted here.

 \section{Hecke Operators}
Similar to the weight 3 case studied in \cite{rosen}, the Hecke operators play an important role in understanding the $\mathbb{K}_1(r,s)$ functions. In this section, we first prove that the set of conjugates of $\mathbb{K}_1(r,s)$ are fixed by the Hecke operators for any $(r,s)\in \mathbb{S}_1$. That is, using the definition given in \cite{rosen}, every family is Galois. This is different from the weight 3 case, for which there are non-Galois families. The reason for this difference is the underlying hypergeometric varieties on which $\mathbb{K}_1(r,s)$ is defined are much nicer. In the first part, we provide some background on Jacobi motives. In the second part, we use this prove the Hecke operators fix each family.

The main theorem of the section is then the following.
\begin{theorem}\label{cons}
    Assume $(r_1,s_1)\in \mathbb S_1$ and its holomorphic conjugates are given by $(r_i,s_i)$. Then each of the $\mathbb{K}_1(r_i,s_i)$ functions lie in the same Hecke orbit of some weight 2 modular form $f_{r_1,s_1}$ with CM by a fundamental discriminant dividing 24. Moreover, there are algebraic constants $\beta_i$ so that $$f_{r_1,s_1}=\beta_0\mathbb{K}_1(r_1,q_1)+\beta_1\mathbb{K}_1(r_2,q_2)+....+\beta_{n-1}\mathbb{K}_2(r_n,q_n).$$
\end{theorem}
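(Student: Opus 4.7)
The plan is to realize each $\mathbb{K}_1(r_i,s_i)$ as a period of a differential of the first kind on the new Jacobian $J_{new}(r_i,s_i)$ and then exploit the fact that $X(r,s)$ is a cyclic quotient of the Fermat curve of degree $M$. As such, its Jacobian decomposes up to isogeny into a product of Jacobi motives, each carrying complex multiplication by a subfield of $\Q(\zeta_M)$. Under Archinard's parametrization the form $\mathbb{K}_1(r,s)$ corresponds to a differential $\omega_n$ with $n \in (\Z/M\Z)^\times$, and the natural action of $(\Z/M\Z)^\times$ on these indices restricts, on the subgroup of residues coprime to $6$, to exactly the conjugacy relation of Definition~\ref{conj}; this restriction is essential, since it simultaneously encodes holomorphicity and the ramification behavior above $2$ and $3$.

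First I would show that the $\overline{\Q}$-span $V$ of $\{\mathbb{K}_1(r_i,s_i)\}$ is stable under the Hecke operators $T_p$ at primes $p$ not dividing the common level. The Eichler--Shimura identification sends $V$ to the space of holomorphic differentials on the CM piece of $J_{new}(r,s)$, and under this dictionary $T_p$ becomes the action of $\Frob_p$ on the Tate module. For primes $p$ split in the CM field $\Q(\sqrt{-D})$ this Frobenius is given by a Gr\"ossencharakter $\chi_{r,s}$ evaluated at a prime above $p$, expressible as a Jacobi sum; at inert primes the eigenvalue vanishes, so any eigenform in $V$ automatically has CM by $\Q(\sqrt{-D})$, where $-D$ is the fundamental discriminant of the CM field of $J_{new}(r,s)$.

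Second, having established Hecke-stability, I would diagonalize the commuting family $\{T_p\}$ on $V$. Because the $\omega_{n_i}$ lie in pairwise distinct $\chi$-isotypic components of the CM decomposition, the $\mathbb{K}_1(r_i,s_i)$ are linearly independent, so $\dim V = n$; one eigenvector is the sought eigenform $f_{r_1,s_1}$ and its Galois conjugates furnish the remaining eigenvectors, which are themselves the forms attached to the other $(r_i,s_i)$. The algebraic constants $\beta_i$ are then the entries of the change-of-basis matrix from the period basis $\{\mathbb{K}_1(r_i,s_i)\}$ to the Hecke eigenbasis. That $D$ divides $24$ is forced by comparing the ramification of the CM field to the denominator conditions $24s\in\Z$ and $8(r+s)\in\Z$ built into $\mathbb S_1$, together with the coprime-to-$6$ condition on conjugates.

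The main obstacle will be making the period-to-motive correspondence sufficiently precise to read off both the CM discriminant $-D$ and the Gr\"ossencharakter $\chi_{r,s}$ explicitly from the parameters $(r,s)$, and in particular handling the ramified behavior at the small primes $2$ and $3$ where $\Q(\zeta_M)$ interacts non-trivially with the coprime-to-$6$ subgroup. Once this has been carried out, the modularity of $f_{r_1,s_1}$ and the explicit linear combination formula follow from standard linear algebra on the finite-dimensional space $V$.
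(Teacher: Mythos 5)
Your overall strategy is the same as the paper's: identify the conjugate family $\{\mathbb{K}_1(r_i,s_i)\}$ with the holomorphic differentials on the CM factor $J_{new}$ of the Jacobian of a Fermat-curve quotient, deduce Hecke stability of their span $V$, and diagonalize the commuting $T_p$ to extract the CM eigenform and the coefficients $\beta_i$. The gap is in the Hecke-stability step. You invoke an ``Eichler--Shimura identification'' under which $T_p|_V$ becomes $\Frob_p$ acting on the Tate module of $J_{new}$, but Eichler--Shimura identifies Hecke operators with Frobenius only on Hecke-stable pieces of the Jacobian of a \emph{modular} curve; at that point of the argument you do not yet know that $J_{new}$ is such a piece, nor that the subspace $V$ spanned by the $\mathbb{K}_1$'s inside the ambient space of weight $2$ forms corresponds to a Hecke-stable abelian subvariety. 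Using the dictionary to prove $T_pV\subseteq V$ is therefore circular. The paper closes exactly this hole: by Rohrlich's realization of Fermat curves as modular curves, $X(r,s)\cong X(\Gamma)$ for a finite-index $\Gamma\subset SL_2(\Z)$ (Lemma \ref{sub}); pulling back along $X(N)\to X(\Gamma)$ and using the isogeny decomposition of $J_1(N^2)$ into varieties $A_f$, one identifies $J_{new}(R,S)\sim A_f$ for a (necessarily CM) congruence newform $f$, and Hecke stability follows because the $T_p$ stabilize $A_f$. One must also address the possibility that $\Gamma$ is noncongruence (handled in the paper via Berger's decomposition, or Ribet's $GL_2$-type theorem, or the observation that the $\mathbb{K}_1$ are themselves congruence forms), which your sketch does not mention. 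A further technical point you gloss over: the comparison with the Fermat Jacobian must be made through the non-holomorphic conjugate $C(1-r,3-s)$, since the exponents of $C(r,s)$ itself violate the condition $R+S\le M$ (Lemma \ref{aok}).

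Second, the algebraicity of the $\beta_i$ does not follow merely from describing them as entries of a change-of-basis matrix to the Hecke eigenbasis; a priori such entries are only complex numbers. The paper obtains algebraicity from the explicit action $T_{p}\mathbb{K}_1(a_j/b,s_j/m)=C(p,i)\,\mathbb{K}_1(a_k/b,s_k/m)$ with integer constants $C(p,i)$ (Lemma \ref{hec}, following Lemma 3.2 of \cite{rosen}), so that the matrix being diagonalized has integer entries in the $\mathbb{K}_1$-basis and its eigenvectors have algebraic (in fact quadratic) coordinates. You would need an analogue of this explicit permutation lemma, or some other rationality argument, to complete your plan.
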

This is the analogue of Theorem 3.5 in \cite{rosen}. This Theorem allows us to study exact $L$-values of the Hecke eigenform as well when we integrate both sides.
\begin{remark}
    We will assume that function $f_{r_i,s_i}$ does not depend on the choice of $i$. To resolve the ambiguity introduced by the quadratic twists, we fix a choice of $f_{r_i,s_i}$ for each family in Table \ref{tab:K1}. Therefore, we will refer to $f_{r,s}$ as mentioned in the introduction as simply $f$ throughout this section when the choice of conjugate family is clear.
\end{remark}

 \subsection{Jacobi Motives}
 Jacobi motives originated as curves whose point counts were given in terms of Jacobi sums, discussed later in Section \ref{jac}. Moreover, Jacobi motives are precisely the motives on which the beta function is a period. This was first shown by Rohrlich in the appendix for \cite{grossror}. The base varieties for these motives are quotients of the Jacobians of a \textit{Fermat curve}, which in affine coordinates affine coordinates are $$F_n: x^n+y^n=1.$$ These quotients have the form $$F(R,S): y^M=x^R(1-x)^S,$$ where $Q$ and $R$ are positive integers, and $R+S\leq M$. See for instance \cite{irakawasasaki}. 
 
 The classical work of Rohrlich \cite{rohrlich}, Koblitz and Rohrlich \cite{kobror}, Noburu Aoki \cite{aoki}, and many others has shown that the Jacobian variety $\text{Jac}(F_n)$ decomposes into a product of $\Q$-simple Abelian varieties up to isogeny, and determined precisely when each factor is absolutely simple, i.e. simple over $\bar{\Q}$. In these papers, the classification is given in terms of triple of integers in $(\Z/M\Z)\times$, $(Q,R,S)$, that satisfy $Q+R+S=M$. The relation between Archinard's varieties $J_{new}(r,s)$ is very straightforward. 

\begin{lemma}\label{aok}
 Let $R(r,s)=R=Mr$ and  $S(r,s)=S=M(s-r-1/2)$ and assume $T$ is equivalent to $M-R-S$ modulo $M$. Then the Abelian variety $J_{new}(1-r,3-s)$ is isogenous to the factor of the Jacobian labeled $(Q,R,S)$ in the decomposition of the Jacobian for the Fermat curve $F_M$. 
\end{lemma}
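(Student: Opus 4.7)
The plan is to match Archinard's new-Jacobian data to the Koblitz--Rohrlich decomposition of the Fermat Jacobian via an explicit change of parameters. First I would substitute $r \mapsto 1-r$ and $s \mapsto 3-s$ into the defining affine equation \eqref{crs}: a direct computation gives
\[
C(1-r,3-s):\quad y^M = x^{Mr}(1-x)^{M(s-r-1/2)} = x^R(1-x)^S,
\]
so $X(1-r,3-s)$ is literally the projective desingularization of the quotient curve $F(R,S)$. Setting $Q \equiv M - R - S \pmod M$ (the quantity called $T$ in the statement) produces a triple $(Q,R,S)$ with $Q + R + S \equiv 0 \pmod M$, as required by the Koblitz--Rohrlich indexing.

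Next I would invoke the standard cover $\pi \colon F_M \to F(R,S)$ realized in affine coordinates by $(u,v) \mapsto (u^M,\, u^R v^S)$, which exhibits $F(R,S)$ as a quotient of the Fermat curve (see the appendix to \cite{grossror}). The pullback $\pi^*$ on $H^0(\Omega^1)$ sends Archinard's basis element $\omega_n = (1-x)^a x^b / y^n$ from \eqref{omegan} into the character eigenspace of the natural $\mu_M \times \mu_M$-action on differentials of $F_M$ with character indexed by $n \cdot (Q,R,S) \pmod M$. By definition, $J_{new}(1-r, 3-s)$ is spanned by those $\omega_n$ with $(n,M)=1$, so $\pi^*$ identifies its differentials with exactly the eigenspaces lying in the $(\Z/M\Z)^\times$-orbit of $(Q,R,S)$.

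Finally, I would invoke the explicit decomposition of $\text{Jac}(F_M)$ into $\Q$-simple abelian factors indexed by $\Gal(\Q(\zeta_M)/\Q)$-orbits of such triples, due to Rohrlich \cite{rohrlich} and Aoki \cite{aoki}: the $(\Z/M\Z)^\times$-orbit of $(Q,R,S)$ singles out precisely the factor labeled $(Q,R,S)$ there. Since both $J_{new}(1-r,3-s)$ and this factor have dimension $\varphi(M)/2$, the map $\pi^*$ realizes an isogeny, not merely an embedding. The main technical obstacle is reconciling notation: Archinard uses half-integer-shifted rational parameters $(r,s)$, while Koblitz--Rohrlich and Aoki use integer triples modulo $M$. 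The substitution $r \mapsto 1-r$, $s \mapsto 3-s$ is precisely what clears the $1/2$ appearing in $s-r-1/2$ and puts $R$, $S$, $Q$ into the correct residue classes; once this bookkeeping is settled, the lemma is a direct consequence of the already-established decomposition theorems.
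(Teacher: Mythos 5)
Your proposal is correct in substance, but it takes a genuinely different route from the paper. Both arguments begin with the same key observation — substituting $(1-r,3-s)$ turns \eqref{crs} into $y^M=x^R(1-x)^S$ with $R+S=M(s-1/2)\le M$, which is exactly why the non-holomorphic conjugate is the right base variety — and both ultimately lean on the Koblitz--Rohrlich description of $\mathrm{Jac}(F_M)$. The paper then proceeds analytically: it uses the fact that the holomorphic conjugates of $\mathbb{K}_1(r,s)$ generate the differentials of the first kind on $J_{new}(1-r,3-s)$, computes its period matrix, and matches it (up to a constant) with the period matrix attached to the triple $(R,S,T)$ in \cite{kobror}, concluding the complex tori are isogenous. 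You instead argue functorially: you pull back Archinard's basis $\omega_n$ along the explicit covering $F_M\to F(R,S)$, $(u,v)\mapsto(u^M,u^Rv^S)$, observe that the new differentials land in the character eigenspaces in the $(\Z/M\Z)^\times$-orbit of $(Q,R,S)$, and conclude by a dimension count that $\pi^*$ (which has finite kernel since $\pi_*\pi^*$ is multiplication by $\deg\pi$) gives an isogeny onto the corresponding factor. Your route avoids any transcendental comparison and produces the isogeny from an algebraic correspondence, making the eigenspace bookkeeping explicit; the paper's route is shorter on geometry and dovetails with the period computations used later for the $L$-values. One small point you share with the paper but should flag if you write this up: identifying the image with the full factor via dimensions uses that the orbit of $(Q,R,S)$ has size $\varphi(M)$, i.e.\ $\gcd(Q,R,S,M)=1$, so that the factor indeed has dimension $\varphi(M)/2$ matching $J_{new}$; also note the sign/orientation of the character ($\pm n\cdot(Q,R,S)$ depending on conventions) is immaterial since $-n$ lies in the same $(\Z/M\Z)^\times$-orbit.
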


\begin{proof}
 Note that $F(R,S)$ has the same shape as the variety $C(r,s)$ defined in \eqref{crs}, except that we have that $M(1-r)+M(3/2+r-s)=M(5/2-s)$, which is always greater than $M$ by the definition of $\mathbb S_1$. To evade this problem, we the base variety for a non-holomorphic conjugate of $\mathbb{K}_1(r,s)$, specifically $\mathbb{K}_1(1-r,3-s)$. Here, $\mathbb{K}_1(r,s)$ is still a period of a differential of the first kind on $C(1-r,3-s)$, just that differential is not necessarily $dx/y$ as in Archinard's construction. Note that $$C(1-r,3-s): y^M= x^{Mr}(1-x)^{M(s-r-1/2)}$$ satisfies $Mr+M(s-r-1/2)=M(s-1/2)\leq M$.  Therefore, from the point of view of Fermat curves, this is the \say{correct} base variety to use for the family of conjugates associated to $(r,s)$.  By the work of, e.g. \cite{archinard}, the Jacobian of $J_{new}(1-r,3-s)$ is $\Q$-simple, and the $\mathbb{K}_1(r,s)$ are differentials on this variety. Moreover, the family of holomorphic conjugates generates the space of differentials of the first kind. This allows us to compute the period matrix for $J_{new}(1-r,3-s)$, which is equal to the period matrix for the triple $(R,S,T)$ up to a constant multiple in Koblitz and Rohrlich \cite{kobror}, and therefore the two complex tori are isogenous.
\end{proof}

  \noindent\textit{Notation}:  For convenience, we will use the notation $J_{new}(R,S)$ for $J_{new}(1-r,3-s)$ from now on.
  \bigskip
  
We then appeal to methods of Rohrlich \cite{rohrlich}, who realizes a Fermat curve as a modular curve, which implies $J_{new}(R,S)$ is a factor of the Jacobian for some modular curve.

\begin{lemma}\label{sub}
    There exists a subgroup $\Gamma$ of $SL_2(\Z)$ of finite index so that $X(\Gamma)\cong X(r,s)$.
\end{lemma}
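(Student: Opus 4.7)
The plan is to deduce this from Rohrlich's explicit modular parametrization of the Fermat curve $F_M$ combined with the quotient description of $X(r,s)$ given by Lemma \ref{aok}.

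First I would recall that Rohrlich showed in his thesis that for every positive integer $M$, the Fermat curve $F_M: x^M + y^M = 1$ is isomorphic (as a compact Riemann surface, hence as a complex algebraic curve) to a modular curve $X(\Gamma_M)$ for an explicit congruence subgroup $\Gamma_M \subset SL_2(\Z)$ of finite index. (For $M$ an odd prime, $\Gamma_M$ is closely related to $\Gamma(2M)$, and a general construction is described in the introduction to \cite{rohrlich}.) This realization is compatible with the automorphism group action: the natural action of $\mu_M \times \mu_M$ on $F_M$ by multiplying the coordinates by roots of unity corresponds to an action of a finite group on the modular curve coming from the normalizer of $\Gamma_M$ inside $SL_2(\Z)$.

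Next I would express $X(r,s)$ as an explicit quotient of $F_M$ by a finite subgroup $H \subset \text{Aut}(F_M)$. From the affine model $F(R,S): y^M = x^R(1-x)^S$ used in Lemma \ref{aok}, there is a standard quotient map
\begin{equation*}
F_M \longrightarrow F(R,S), \qquad (u,v) \longmapsto \bigl(u^M,\; u^R(1 - u^M)^{S'}\bigr)
\end{equation*}
(for an appropriate choice of exponent $S'$) which realizes $F(R,S)$ as a quotient of $F_M$ by a suitable subgroup $H \subset \mu_M \times \mu_M$. Taking projective desingularizations on both sides and invoking Lemma \ref{aok} identifies $X(r,s)$ (up to birational equivalence, hence isomorphism for smooth projective curves) with $F_M/H$.

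Finally, I would lift the quotient to modular curves. Let $\widetilde H \subset SL_2(\Z)$ be the preimage under the correspondence between $\text{Aut}(F_M)$ and the normalizer $N(\Gamma_M)/\Gamma_M$ of the subgroup identified with $H$, and set $\Gamma := \langle \Gamma_M, \widetilde H\rangle$. Then $\Gamma$ has finite index in $SL_2(\Z)$, and the quotient description
\begin{equation*}
X(\Gamma) \;=\; X(\Gamma_M)/\bigl(\Gamma/\Gamma_M\bigr) \;\cong\; F_M/H \;\cong\; X(r,s)
\end{equation*}
gives the claim.

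The main obstacle is checking that Rohrlich's modular parametrization of $F_M$ does intertwine the $\mu_M \times \mu_M$ action on $F_M$ with a group of modular automorphisms coming from $SL_2(\Z)$, so that the quotient by $H$ actually corresponds to enlarging $\Gamma_M$ to a congruence subgroup $\Gamma$. Once this compatibility is in place—which is the content of Rohrlich's construction together with a bookkeeping argument on the exponents $R,S$—the desired subgroup $\Gamma$ is obtained concretely and the lemma follows.
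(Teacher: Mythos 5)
Your argument is correct and ultimately rests on the same ingredient as the paper, namely Rohrlich's modular uniformization coming from the $\lambda$-function, but you route it through $F_M$ and a quotient, whereas the paper's one-line proof means running Rohrlich's covering argument directly on $X(r,s)$: away from $\{0,1,\infty\}$ the coordinate $x=\lambda$ exhibits $C(r,s)$ as an unramified degree-$M$ cyclic cover of $Y(2)=\H/\Gamma(2)$, so it corresponds to the kernel of a homomorphism $\Gamma(2)\to\Z/M\Z$ determined by the exponents, i.e.\ to a finite-index subgroup $\Gamma\subset\Gamma(2)$ with $X(\Gamma)\cong X(r,s)$. Your detour works because the compatibility you flag as the ``main obstacle'' is automatic in Rohrlich's setup: his group $\Phi_M$ is exactly the kernel of $\Gamma(2)\to(\Z/M\Z)^2$, and the deck group $\Gamma(2)/\{\pm 1\}\Phi_M\cong(\Z/M\Z)^2$ acts on $F_M$ precisely by multiplying the coordinates by $M$-th roots of unity, so your $H$ lifts to an intermediate group $\Phi_M\subset\Gamma\subset\Gamma(2)$ (normality of $\Phi_M$ in $\Gamma(2)$ makes the quotient description of $X(\Gamma)$ legitimate); the direct approach simply skips this bookkeeping. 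Two corrections: first, Rohrlich's group, and hence your $\Gamma$, is in general \emph{not} a congruence subgroup --- this is exactly why the paper later invokes Berger's decomposition for the noncongruence case --- but the lemma only asserts finite index, so nothing in your proof breaks; second, the explicit quotient map should be $(u,v)\mapsto(u^M,\,u^R v^S)$, using $1-u^M=v^M$ on $F_M$, since as written $u^R(1-u^M)^{S'}=u^R v^{MS'}$ cannot give the needed $v^S$ for any integer $S'$ in general.
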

\begin{proof}
   The proof is identical to Rohrlich's proof that Fermat curves can be realized as modular curves for a finite index subgroup of $SL_2(\Z)$.
\end{proof}

\begin{remark}
       A related Theorem of Ribet \cite{ribet} states that a simple Abelian variety over $\Q$ of $GL_2$-type is a factor of factor of $J_1(N)$ up to isogeny. In general, quotients of Fermat curves may not be of $GL_2$-type, however. 
    \end{remark}

\subsection{The Action of the Hecke Operators}
\begin{theorem}
    For any conjugate $\{\mathbb{K}_1(r_i,s_i)\}$ family contained in $\mathbb{S}_1$, each $\mathbb{K}_1(r_i,s_i)$ lies in the orbit of the same Hecke eigenform with complex multiplication. Moreover, if $V$ is the $\C$-vector space generated by the set above, $T_pV=V$ for all $p$ coprime to $6$.
\end{theorem}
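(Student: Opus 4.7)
The plan is to combine the geometric identification of $J_{new}(R,S)$ as a $\Q$-simple CM factor of a Fermat Jacobian (Lemma \ref{aok}) with Rohrlich's realization of that curve as a modular curve (Lemma \ref{sub}), and then transfer the Hecke action from the modular side to the space of differentials spanned by the conjugate family.

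First I would recall from Lemma \ref{aok} that $J_{new}(R,S)$ is isogenous to the $(Q,R,S)$-isotypic factor of $\text{Jac}(F_M)$ in the Koblitz--Rohrlich decomposition. This factor is $\Q$-simple and of CM type, and the orbit of $(Q,R,S)$ under multiplication by integers $c$ coprime to $M$ (equivalently, coprime to $6$) corresponds exactly to the conjugate family $\{(r_i,s_i)\}$ in the sense of Definition \ref{conj}. There are $\varphi(M)/2$ such conjugates, matching $\dim J_{new}(R,S)$, and by Archinard's description of the \say{new} holomorphic differentials $\omega_n$ with $(n,M)=1$, the family $\{\mathbb{K}_1(r_i,s_i)\}$ is a basis of $H^0(J_{new}(R,S),\Omega^1)$.

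Next I would use Lemma \ref{sub} to identify $X(r,s)$ with the modular curve $X(\Gamma)$, so that $J_{new}(R,S)$ appears (up to isogeny) as a factor of $\text{Jac}(X(\Gamma))$. Under this identification the $\mathbb{K}_1(r_i,s_i)$ pull back to weight $2$ cusp forms on $\Gamma$, and their $\C$-span is $V$. Since $J_{new}(R,S)$ is $\Q$-simple, any Hecke correspondence on $X(\Gamma)$ induces an endomorphism of $\text{Jac}(X(\Gamma))$ that preserves its $\Q$-simple isogeny class, hence preserves the image of $H^0(J_{new}(R,S),\Omega^1)$ inside the weight $2$ cusp forms. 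This yields $T_p V \subseteq V$ for all $p$ coprime to $6$, and then $T_p V = V$ because at primes of good reduction the CM action forces $T_p$ to be invertible on $V$ (alternatively, a dimension argument together with the non-vanishing of the Grössencharakter at $p$).

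Finally, because $J_{new}(R,S)$ has CM by a CM field $K$ containing $\Q(\sqrt{-D})$, the Hecke algebra acts on $V$ via the CM type, so $V$ diagonalizes into the $\gal(\bar\Q/\Q)$-orbit of a single CM newform $f_{r,s}$ attached to a Grössencharakter $\chi_{r,s}$ of $K$. Each $\mathbb{K}_1(r_i,s_i)$ is then an algebraic linear combination of Galois conjugates of $f_{r,s}$, placing it in the Hecke orbit of $f_{r,s}$. The main obstacle I anticipate is making the compatibility between Rohrlich's non-standard modular parametrization and the classical Hecke correspondences sufficiently precise to conclude Hecke-stability; the cleanest route is to pass to the full Jacobian $\text{Jac}(X(\Gamma))$, where the Hecke action is classical, and then invoke $\Q$-simplicity of $J_{new}(R,S)$ to descend the action to $V$ without needing an explicit geometric description of the correspondence on the Fermat quotient.
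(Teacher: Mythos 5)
Your overall architecture is the same as the paper's (combine Lemma \ref{aok} with Lemma \ref{sub}, transfer the Hecke action through a Jacobian, and use the CM structure to produce the eigenform), but the step where you transfer the Hecke action has a genuine gap. The operators $T_p$ in the statement are the classical Hecke operators on congruence forms: by Proposition \ref{prel} the $\mathbb{K}_1(r_i,s_i)$ are congruence cusp forms, living in $M_2(\Gamma(48))$. By contrast, the group $\Gamma$ supplied by Rohrlich's modular parametrization of Fermat curves is a finite-index subgroup of $SL_2(\Z)$ that is in general \emph{noncongruence} (the paper itself notes that congruence of $\Gamma$ is not guaranteed). For such $\Gamma$ there is no classical Hecke action on $X(\Gamma)$ or on $\mathrm{Jac}(X(\Gamma))$, so your proposed resolution of the compatibility problem you flag --- \say{pass to the full Jacobian $\mathrm{Jac}(X(\Gamma))$, where the Hecke action is classical} --- is circular: the classical action only exists after passing to a congruence curve. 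The paper closes exactly this gap in two ways: when $\Gamma\supset\Gamma(N)$ it uses the projection $X(N)\to X(\Gamma)$ and the fact that $\Gamma(N)$ is conjugate to a group containing $\Gamma_1(N^2)$, so that $J_{new}(R,S)$ is matched, via the decomposition of $J_1(N^2)$ into Hecke-stable factors $A_f$, with $A_f$ for a CM newform $f$; and when $\Gamma$ is noncongruence it invokes Berger's decomposition $M_k(\Gamma)=M_k(\Gamma(N))\oplus C$ with $T_pC=0$, together with the congruence property of the $\mathbb{K}_1(r_i,s_i)$ from Proposition \ref{prel}, to guarantee the forms land in the Hecke-covariant congruence part. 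Without one of these inputs your argument does not yield $T_pV\subseteq V$ for the operators that actually appear in the statement.

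A secondary issue is your justification that $T_pV=V$ rather than merely $T_pV\subseteq V$: the claim that \say{the CM action forces $T_p$ to be invertible on $V$ at primes of good reduction} cannot be right as stated, since a CM eigenform has $a_p=0$ at inert primes, so invertibility does not follow from eigenvalues or from nonvanishing of the Gr\"ossencharakter. In the paper this point is instead the content of Lemma \ref{hec}: $T_p$ sends $\mathbb{K}_1(r_j,s_j)$ to a scalar multiple of another conjugate, and the scalar $C(p,i)$ vanishes only when the required holomorphic conjugate fails to exist, which never happens here because (as shown at the start of Section 3) every conjugate family in $\mathbb{S}_1$ is Galois. Finally, note that preserving the $\Q$-simple isogeny class of $J_{new}(R,S)$ inside the Jacobian only controls the isotypic component; to pin down the specific subspace $V$ one should, as the paper does, identify $J_{new}(R,S)$ with $A_f$ itself and use that the conjugate family spans the holomorphic differentials of $J_{new}(R,S)$.
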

\begin{proof} 
    We rely on Lemma \ref{sub}, which says $X(r,s)$ is isomorphic to a modular curve $X(\Gamma)$. This implies the differentials of $X(r,s)$ correspond to weight 2 modular forms on $\Gamma$. If $\Gamma(N)\subset\Gamma$ for some $N$, i.e. $\Gamma$ is congruence, then $M_2(\Gamma)\subset M_2(\Gamma(N))$.  As $\Gamma(N)\subset\Gamma$, there is a projection $\pi:X(N)\to X(\Gamma)$, which induces a map of the Jacobians as well. Recall $\Gamma(N)$ is conjugate to a group containing $\Gamma_1(N^2)$, so we can understand the Jacobian of $X(\Gamma)$ in terms of the Jacobian of $\Gamma_1(N^2)$. The latter is well-studied, and is denoted by $J_1(N^2)$. A useful property of $J_1(N^2)$ is that it decomposes up to isogeny into a product of $$J_1(N^2)=A_1^{e_1}\times A_2^{e_2}\times...\times A_{m}^{e_m},$$ where $A_i$ are pairwise non-isogenous Abelian varieties that are $\Q$-simple and are associated to a twist class of newforms on a subgroup of $\Gamma_1(N^2)$, and the $e_i$ indicates the level for the newform (see e.g. \cite{diamondshurman}). By Lemma \ref{aok}, up to isogeny $J_{new}(R,S)$ is isomorphic to $\Q$-simple factor of $\text{Jac}(F_M)$, which we denote by $A_{R,S}$. 
    By taking the pullback of the projection, we can associate $A_{R,S}$ to $A_i$ for some $i$. This implies $A_{R,S}\cong A_f$ for some newform $f$, where $A_f$ is the Abelian variety attached to $f$. Moreover, because any factor $A_{R,S}$ of a Fermat curve has generalized complex multiplication, $f$ must be CM as well. As shown in e.g. \cite{diamondshurman}, the Hecke operators stabilize the the varieties $A_f$, and therefore, stabilize $A_{R,S}$. Since the conjugates of $\mathbb{K}_1(r,s)$ generate the space of holomorphic differentials on $J_{new}(R,S)$, we are done.

   Generically, it is not guaranteed that $\Gamma$ is congruence. If we show that each $\Q$-simple factor is of $GL_2$-type, by Ribet's \cite{ribet} result, this follows immediately. We can also sidestep this as follows. If $\Gamma$ is noncongruence it has a congruence closure $\Gamma(N')$ for some $N$, and a result of Berger \cite{berger} decomposes $$M_k(\Gamma)=M_k(\Gamma(N))\oplus C,$$ where $T_pC=0$ for all $p$ and acts as usual on $M_k(\Gamma(N))$. The question is then identifying if a given differential corresponds to an element of $C$ or of $M_k(\Gamma)$. But in our case, the $\mathbb{K}_1(r,s)$ functions for $(r,s)\in \mathbb S_1$ are by definition modular forms on $M_k(\Gamma(48))$, which implies $J_{new}(R,S)$ must still be isomorphic to $A_f$ for some congruence newform $f$, rather than $A_f$ for some $f\in \C$. Then, we can apply the same proof as above.
\end{proof}

From here, the remainder of the proof of Theorem \ref{cons} is extremely similar to the proof given by the author in \cite{rosen}. The key Lemma is the following.

\begin{lemma}\label{hec}
    Assume $\mathbb{K}_1(a_j/b,s_j/m)$ is congruence and holomorphic, and $a_1,...,a_n$ are the residues mod $b$. Also take $p\equiv a_i\mod b$ to be a prime, and $a_k\equiv a_ja_i\mod b$. Then $$T_{p}\mathbb{K}_1(a_j/b,s_j/m)=C(p,i) \mathbb{K}_1(a_k/b,s_k/m)$$ for some integer $C(p,i)$. Moreover, $C(p,i)=0$ if and only if there is no pair $(a_k/b,s/m)$ conjugate to $(a_j/b,s_j/m)$ so that $\mathbb{K}_1(a_k/b,s_k/m)$ is holomorphic.
\end{lemma}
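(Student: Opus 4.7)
The plan is to compute the action of $T_p$ directly using the CM structure on $J_{new}(R,S)$ established above. By Lemma \ref{aok}, this Jacobian factor is $\Q$-simple and acquires CM by an order in the cyclotomic field $\Q(\zeta_b)$, so its space of holomorphic differentials decomposes into one-dimensional eigenspaces under the CM action, indexed by characters $\chi_{a_j}$ of $(\Z/b\Z)^{\times}$. By the previous theorem, these eigenspaces are spanned by (scalar multiples of) the holomorphic conjugates $\mathbb{K}_1(a_j/b, s_j/m)$, and the full family forms a basis for the $T_p$-stable space $V$ of holomorphic weight-$2$ cusp forms on the relevant level, for $p$ coprime to $6$.

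First I would observe that $T_p$ commutes with the CM action, since $\Z[\zeta_b]\subset \text{End}(J_{new}(R,S))$ and $p$ is coprime to the level. Consequently $T_p$ permutes the one-dimensional CM eigenspaces. The shift is dictated by the Frobenius at $p$: since $\zeta_b \mapsto \zeta_b^p$, the character $\chi_{a_j}$ is sent to $\chi_{p a_j}=\chi_{a_i a_j}$ because $p \equiv a_i \pmod b$. Writing $a_k \equiv a_j a_i\pmod b$ yields
$$T_p \mathbb{K}_1(a_j/b, s_j/m) = C(p,i)\, \mathbb{K}_1(a_k/b, s_k/m)$$
for some constant $C(p,i)$. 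Integrality follows because both sides can be expressed as $\Q(\zeta_b)$-linear combinations of the Galois conjugates of the CM newform $f_{r,s}$ (from the preceding theorem), and the Hecke eigenvalues of $f_{r,s}$ at split primes are traces of values of the associated Gr\"ossencharakter, hence algebraic integers; comparing the $q^p$-coefficient on both sides extracts the explicit formula.

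For the vanishing statement, if there is no holomorphic pair $(a_k/b, s/m)$ conjugate to $(a_j/b, s_j/m)$, then the target eigenspace is simply not represented in $V$. Since $T_p V\subseteq V$, the eigenvalue $C(p,i)$ must be zero. Conversely, if such a holomorphic conjugate does exist, the condition $p\equiv a_i\not\equiv 0\pmod b$ ensures that $p$ splits nontrivially relative to the CM field; the corresponding Gr\"ossencharakter value at the prime above $p$ is nonzero, forcing $C(p,i)\neq 0$.

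The main technical obstacle is justifying the character-shift step rigorously: one must track how Frobenius at $p$ acts through Deligne's $\ell$-adic representation of $f_{r,s}$ into the CM characters indexing the Archinard differentials. The argument follows the template of the weight-$3$ analogue in \cite{rosen}, simplified here by the fact that our base varieties are quotients of Fermat curves with full CM by $\Q(\zeta_b)$ rather than the hypergeometric surfaces with only partial CM treated there; the modular parametrization in Rohrlich \cite{rohrlich} makes the identification of characters with residues $a_j \bmod b$ transparent.
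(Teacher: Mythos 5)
The paper's own proof is a citation: it runs the argument of Lemma~3.2 of \cite{rosen} verbatim, which works at the level of the modular forms themselves (the Hecke action on the $q$-expansions of the eta quotients $\mathbb{K}_1$, whose Fourier supports single them out inside the Hecke-stable span of the conjugate family), and integrality of $C(p,i)$ comes from the integral Fourier coefficients. Note that the weight-3 families in \cite{rosen} are generally non-CM, so that argument cannot and does not use CM endomorphisms; your proposal is therefore a genuinely different route, but as written it has a gap at its central step. You assert that $T_p$ commutes with the action of $\Z[\zeta_b]$ on $J_{new}(R,S)$ and conclude that $T_p$ therefore permutes the one-dimensional CM eigenspaces with a shift governed by Frobenius. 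These two claims are inconsistent: if $T_p$ literally commuted with every CM endomorphism, then for $v$ with $[\alpha]v=\chi(\alpha)v$ one gets $[\alpha]T_pv=\chi(\alpha)T_pv$, so $T_p$ would preserve each eigenspace and there would be no shift $a_j\mapsto a_ia_j$ at all. The shift can only come from a twisted relation of the form $T_p\circ[\alpha]=[\sigma_p(\alpha)]\circ T_p$, reflecting that the endomorphisms are defined over the cyclotomic field (which, incidentally, is $\Q(\zeta_M)$ with $M=\mathrm{lcd}(r,s,1/2)$, not $\Q(\zeta_b)$) while the Hecke correspondence is defined over $\Q$. You flag exactly this identification as ``the main technical obstacle'' and never establish it, but it is the whole content of the lemma; without it the argument does not get off the ground.

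Two subsidiary steps also do not hold as stated. For integrality, writing both sides as $\Q(\zeta_b)$-linear combinations of Galois conjugates of $f_{r,s}$ and invoking integrality of Hecke eigenvalues yields at best that $C(p,i)$ is an algebraic integer; the clean argument (and the one implicit in the paper) is that both $\mathbb{K}_1$ functions have integral $q$-expansions with unit leading term, so once the monomial shape $T_p\mathbb{K}_1(a_j/b,s_j/m)=C\,\mathbb{K}_1(a_k/b,s_k/m)$ is known the scalar is forced to be a rational integer. For the converse of the vanishing statement, your claim that a holomorphic target forces $C(p,i)\neq 0$ because ``the Gr\"ossencharakter value at the prime above $p$ is nonzero'' presupposes an identification of $C(p,i)$ with a character (Jacobi-sum) value; in this paper that identification is only obtained later, in Theorem~\ref{gal}, and only for $p\equiv 1\bmod M$, so at this stage the appeal is circular and nothing in your argument excludes $C(p,i)=0$ when the holomorphic conjugate exists.
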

\begin{proof}
   The proof is virtually identical to the proof of Lemma 3.2 in \cite{rosen}. We omit the details. 
\end{proof}
The remainder of the proof of Theorem \ref{cons} is obtained by diagonalizing the matrix obtained via Lemma \ref{hec} above. In fact, the proof of Lemma 3.3 in \cite{rosen} carries through almost word for word, and so the $\beta_i$ are (individually) quadratic. We refer the reader to \cite{rosen} for the details. The $\beta_i$ can also easily be identified once the Hecke orbit of $\mathbb{K}_1(r,s)$ is determined using the $L$-functions and Modular Forms Database (LMFDB).

\section{Galois Representations}\label{jac}
Recall a Jacobi sum is defined as as a certain special character sum and is the \'Etale realization of a Jacobi motive. To this end, we define the Jacobi sum in a way that corresponds to the de Rham realization of the same motive we have already discussed, namely $B(r,s-r-1/2)$. Take $r$ and $s$ to be rational numbers, $M_{r,s}=M=\text{lcd}(r,s,1/2)$ as above, and let $\mathfrak{p}$ denote a prime ideal of the cyclotomic field coprime to $M$, and let $p$ denote the rational prime below $\mathfrak{p}$. Then there is a character $\omega$ mod $p$ of order $M$ so that $\omega(x) \equiv x^{\frac{p-1}{M}} \mod \mathfrak{p} \quad \forall x\in \Z[\zeta_M]$, called the $M$th residue symbol. Under these assumptions, we define the map $\iota(i/M)=\omega^i$. This notation is borrowed from \cite{HMM1}; see that paper for more discussion, as well as Weil's original paper \cite{weil}. We define \begin{equation}\label{jacobi}
     J_\mathfrak{p}(r,s)=\sum_{k=1}^{p-1}\iota(r)(k)\iota(s)(1-k).
 \end{equation}   Weil \cite{weil} showed that $ -J_\mathfrak{p}(r,s)$ can be identified with a specific Hecke Gr\"ossencharakter of $\Q(\zeta_M)$ modulo $M^2$ when we allow $\mathfrak{p}$ to vary. This is the \'Etale realization of the Jacobi motives mentioned above. This is precisely why Jacobi sums are said to \say{correspond} to beta values in the so-called dictionaries between the complex and finite field setting introduced by Greene \cite{greene}.
 
In \cite{HMM2} by Allen et al., Lemma 4.4 provides an explicit modularity result for $\mathbb{K}_1(1/D,3/2-1/D)$, where $D=3,4,8,12,24$ in terms of Jacobi sums. As expected, this relates the representation attached to the newform $f_D$ associated to $\mathbb{K}_1(1/D,3/2-1/D)$ with a direct sum of Gr\"ossencharakters attached to Jacobi sums. Let $p\equiv 1\mod M$ be a prime and $\mathfrak{p}$ a prime ideal above $p$ in $\Q(\zeta_M)$. Since $M=\text{lcd}(r,s,1/2)$, $M=D$ if $D$ is even and $M=2D$ otherwise. Explicitly, they showed $$a_p(f_D)=-[J_{\mathfrak{p}}(1/D,1-2/D)+J_{\mathfrak{p}}(1/D+1/2,-2/D)]\omega_p(2)^{-4(p-1)/D},$$ where $J_{\mathfrak{p}}(r,s)$ is defined as in equation \eqref{jacobi}. Here $\omega_p$ is a  generator of the character group of $(\Z/p\Z)^\times$. A power of this character is equal to $\chi_{\mathfrak{p}}$ used in the definition of the Jacobi sum in equation \eqref{jacobi}, as $M$ divides $p-1$, and $p\equiv 1\mod M$. Therefore, the Jacobi sums can be implemented using only $\omega_p$. We fix an embedding into $\Q_p$ using the so-called Teichm\"uller character. 
The beta values corresponding to these two Jacobi sums are $B(1/D,1-2/D)$ and $B(1/D+1/2,-2/D)$ correspond to periods of a differential of the first kind and a non-holomorphic differential on $J_{new}(1/D,1-2/D)$. This suggests that the two characters that the Galois representation attached to $f_D$ decomposes into correspond to a differential of the first kind and, essentially, a differential of the second kind.

In this section, we will generalize Lemma 4.4 of \cite{HMM2} to any congruence $\mathbb{K}_1(r,s)$ function. The proof strategy will be similar.

The first step is to prove the congruence. Following \cite{HMM2}, this can be achieved using commutative formal group laws. The calculation below is follows from the work of the same authors in \cite{HMM1}, but we sketch the details here for completeness. 

\begin{lemma}
  For $p\equiv 1\mod M$ and $f_{r,s}$ the Hecke eigenform associated to the pair $(r,s)$ in Table \ref{tab:K1}, $$-\omega_p^{-4(p-1)r}(2) \cdot J_{\mathfrak{p}}(r,s-r-1/2)\equiv a_p(f_{r,s})\mod p.$$
\end{lemma}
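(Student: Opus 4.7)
The plan is to follow the commutative formal group law (CFGL) approach used in \cite{HMM1} and \cite{HMM2}, adapted to the weight-2 setting here.

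First, using Proposition \ref{prel} and the identity $\mathbb{K}_1(r,s)(\tau) = 2^{1-4r}\lambda^r(1-\lambda)^{s-r-1/2}\theta_3^4(\tau)$, I would interpret $\mathbb{K}_1(r,s)(\tau)\,d\tau$, up to the $2^{1-4r}$ prefactor, as the pullback along the $\lambda$-parametrization of a holomorphic differential of the first kind on the quotient Fermat curve $C(1-r,3-s)$, via Lemma \ref{aok}. Expanding $\lambda$ in the local uniformizer $q^{1/2}$ at the cusp $i\infty$ and formally integrating produces a formal group logarithm whose coefficients are precisely the hypergeometric coefficients $(1/2)_n(r)_n/((1)_n(s)_n)$.

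Next, I would invoke an Atkin--Swinnerton-Dyer style congruence for the weight-2 modular form attached to this CFGL. Since the weight is $2$, the Deligne bound $|a_p|\le 2\sqrt{p}$ implies that an ordinary mod-$\mathfrak{p}$ congruence already pins down $a_p$ uniquely for $p$ large enough, so unlike the weight-3 setting of \cite{HMM1} no supercongruence is required. For primes $p\equiv 1 \pmod{M}$, the congruence should take the shape
\[
 a_p\bigl(\mathbb{K}_1(r,s)\bigr) \;\equiv\; 2^{1-4r}\sum_{k=0}^{p-1} \frac{(1/2)_k(r)_k}{(1)_k(s)_k} \pmod{\mathfrak{p}}.
\]

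Third, I would identify this truncated ${}_2F_1(1)$ with a Jacobi sum. For $p \equiv 1 \pmod{M}$ the characters $\iota(r)$, $\iota(1/2)$, $\iota(s-r-1/2)$ are all well defined, and the finite-field analogue of Gauss's summation formula (a direct mod-$p$ reduction of the proof of \eqref{eq:K1-L}, obtainable via a Vandermonde-type identity as in \cite{HMM1}) reduces the truncated sum to $-J_\mathfrak{p}(r,\,s-r-1/2)$. The $2^{1-4r}$ prefactor, after embedding into $\Q_p$ via the Teichmüller character, becomes $\omega_p(2)^{-4(p-1)r}$, producing the right-hand side of the lemma. Finally, to pass from $a_p(\mathbb{K}_1(r,s))$ to $a_p(f_{r,s})$, I would apply Lemma \ref{hec}: when $p \equiv 1 \pmod{M}$, $T_p$ acts on $\mathbb{K}_1(r,s)$ as a scalar, and the normalization of $f_{r,s}$ fixed in Table \ref{tab:K1} identifies that scalar with $a_p(f_{r,s})$.

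The main obstacle will be the bookkeeping in step three: aligning the sign conventions in Weil's identification of Jacobi sums with Grössencharakters, the conventions for $\iota$ and $\omega_p$, and the Teichmüller lift of $2^{1-4r}$, so that the final scalar is exactly $-\omega_p^{-4(p-1)r}(2)$ rather than off by a root of unity. A secondary subtlety is that the formal integration in step one is only known to yield the ${}_2F_1$ coefficients near a cusp where $\lambda = 0$; I would need to verify that the relevant $q$-expansion at $i\infty$ corresponds to this cusp under the modular parametrization supplied by Lemma \ref{sub}.
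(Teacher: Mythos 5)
Your overall plan (compare two expansions of $\mathbb{K}_1(r,s)$ through a commutative formal group law, as in \cite{HMM1}) is the same strategy the paper uses, but the core technical step is misidentified, and as written the argument does not go through. The local expansion that feeds the formal-group comparison is the expansion of $2^{1-4r}\lambda^{r}(1-\lambda)^{s-r-3/2}\frac{d\lambda}{\lambda}$ in the variable $u=\lambda^{1/b}$, i.e.\ the \emph{binomial} series with coefficients $A_{r,s}(k)=\frac{(s-r-3/2)_k}{k!}$; it is not a series whose coefficients are the hypergeometric numbers $(1/2)_k(r)_k/\big((1)_k(s)_k\big)$. The ${}_2F_1(1/2,r;s;1)$ in this paper only appears \emph{after} integrating over $[0,1]$, as a repackaging of the beta value via the Euler integral and Gauss summation (cf.\ \eqref{eq:K1-L}); there is no factor of the form $(1-\lambda t)^{-a}$ in the integrand, so no hypergeometric coefficients occur in the local expansion at all. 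Consequently your proposed Atkin--Swinnerton-Dyer-type congruence
\begin{equation*}
a_p\equiv 2^{1-4r}\sum_{k=0}^{p-1}\frac{(1/2)_k(r)_k}{(1)_k(s)_k}\pmod{\mathfrak p}
\end{equation*}
is not what the formal group machinery delivers, and your third step (a mod-$p$ ``finite-field Gauss summation'' turning a truncated ${}_2F_1(1)$ into a Jacobi sum) is an extra, unjustified layer: such truncated-versus-finite-field dictionaries typically carry correction terms and would need to be proved separately. The paper's route is shorter and avoids this entirely: the relevant $u$-coefficient is the single binomial term $A_{r,s}\big((p-1)r\big)$, and the Gross--Koblitz formula \cite{grosskoblitz} gives $A_{r,s}\big((p-1)r\big)\equiv -J_{\mathfrak p}(r,s-r-1/2)\bmod p$ directly (with the hypotheses $r,\,s-r-1/2,\,s-1/2\in(0,1)$ guaranteed by membership in $\mathbb S_1$).

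A second concrete gap is the origin of the factor $\omega_p^{-4(p-1)r}(2)$. It is not the Teichm\"uller lift of the prefactor $2^{1-4r}$ (that would produce an exponent governed by $1-4r$ and no factor of $p-1$); it comes from the isomorphism between the two formal group laws, i.e.\ from the change of variables between $u=\lambda^{1/b}$ and $q_b$ induced by $\lambda=16q+O(q^2)$, which yields $\iota_{\mathfrak p}(-i/M)\cdot c_{p}\equiv b_p\bmod p$ and $\chi_{\mathfrak p}(16)^{-i}=\omega_p^{-4(p-1)r}(2)$ with $i=Mr$. Finally, the passage from the $p$-th coefficient of $\mathbb{K}_1(r,s)$ to $a_p(f_{r,s})$ is only a mod-$p$ statement coming from the construction of $f_{r,s}$ in Theorem \ref{cons} and Table \ref{tab:K1}; invoking Lemma \ref{hec} to say $T_p$ acts ``as a scalar'' is in the right spirit but you would still need to phrase it as a congruence, and the geometric detour through Lemmas \ref{aok} and \ref{sub} is not needed for this lemma.
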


\begin{proof}
   We compute $\mathbb{K}_1(r,s)$ as a power series in two different ways, and show that the underlying commutative formal group laws are isomorphic, and so the $p$th coefficient of each power series are congruent to each other modulo $p$, (see \cite{grouplaw}). The first power series arises from expanding $\lambda^r(1-\lambda)^{s-r-3/2}$ in terms of $\lambda^r$ by using the binomial series for $(1-\lambda)^{s-r-3/2}$. The series is \begin{equation*}\label{binomial}
    (1-\lambda)^{s-r-3/2}=\sum_{k=0}^\infty \frac{(s-r-3/2)_k}{k!}\lambda^k.
\end{equation*}
Following \cite{HMM1}, we define $A_{r,s}(k):=\frac{(s-r-3/2)_k}{k!}$. Assume $r=i/M$, where $M$ is the lowest common denominator of $1/2,r,s$, and $i$ is an integer, not necessarily coprime to $M$. Take $b={M}/{\text{gcd}(i,M)}. $ We will make the substitution $u=\lambda^{1/b}$, which produces the following expansion for $\mathbb{K}_1(r,s)$.
\begin{equation}\label{uexp}
    \mathbb{K}_1(r,s)=\frac{M}{\text{gcd}(i,M)}\sum_{k=0}^\infty A_{r,s}(k)u^{kb+a}\frac{du}{u}.
\end{equation}
This expansion is exactly what we need, since by an application of the Gross-Koblitz formula \cite{grosskoblitz}, in \cite{HMM1} they show that \begin{equation}
    A_{r,s}((p-1)r)\equiv -J_{\mathfrak{p}}(r,s-r-1/2)\mod p,
\end{equation} assuming $r$, $s-r-1/2$, and $s-1/2$ are all in the interval $(0,1)$, which is always true for a congruence form by the definition of $\mathbb{S}_1$.
The extra 1/2 compared to the equation in \cite{HMM2} is because the exponent $(1-\lambda)$ is $s-r-1$, while we use $s-r-3/2$. However, the exact same argument still holds otherwise.

We can also expand $\mathbb{K}_1(r,s)$ using the Fourier expansion in terms of $s_b:=e^{2\pi i \tau/b}$. We do this by replacing $\lambda$ with its Fourier expansion, $\lambda=16s+O(q^2)$. We may therefore write $$b\mathbb{K}_1(r,s)=\sum_{k=0}^\infty c_k u^k=16^r\sum_{k=1}^\infty b_k q_b^k.$$ To obtain the coefficient $A_{r,s}(r(p-1))$, we use $c_{mp}$ in the series expansion above, and the $p$th Fourier coefficient $a_p(f_{r,s})$ mod $p$ is $b_p$. Using the properties of the formal group law (see \cite{HMM1}, Equation 3.20), the authors of \cite{HMM1} show that the relationship between $c_{mp}$ and $b_p$ is $$\iota_{\mathfrak{p}}(-i/M) \cdot c_{p\ell}\equiv b_p\cdot c_\ell \mod p,$$ where $\ell$ is some integer. Recall $\iota_{\mathfrak{p}}(-i/M)=\chi_{\mathfrak{p}}^{-i}.$ Taking $\ell=1$ and note that $c_1=1$ when expanding $\mathbb{K}_1(r,s)$, we get that $\chi_{\mathfrak{p}}(16)^{-i} \cdot c_{p}\equiv b_p$. We also have that $\chi_\mathfrak{p}(16)^{-i}=\omega_p^{-4(p-1)r}(2)$. By construction, $b_p\equiv a_p(f_{r,s})\mod p$ and $$c_p\equiv A_{r,s}(r(p-1))\equiv -J_{\mathfrak{p}}(r,1-r-1/2)\mod p,$$ and so we deduce the desired congruence.
\end{proof}

\begin{theorem}\label{gal}
    For $p\equiv 1\mod M$, we have $$a_p(f_{r,s})= -\omega_p^{-4(p-1)r}(2)\cdot  J_{\mathfrak{p}}(r,s-r-1/2)-\omega_p^{-4(p-1)(1-r)}(2) \cdot J_{\mathfrak{p}}(1-r,1/2-s+r).$$
\end{theorem}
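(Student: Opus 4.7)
The plan is to upgrade the mod $p$ congruence of the preceding lemma to an exact equality by recognizing both Jacobi sums on the right-hand side as values of Hecke Gr\"ossencharakters whose direct sum captures $\rho_{f_{r,s}}|_{G_M}$. The preceding lemma already handles the first summand, so I need to (i) produce the second summand via a parallel formal-group-law congruence on a non-holomorphic conjugate, (ii) fit both pieces together using the CM structure of $f_{r,s}$, and (iii) upgrade the congruence to exact equality.

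For step (i), I would apply the same Gross--Koblitz / formal group law argument as in the preceding lemma to the $c=-1$ conjugate pair $(r',s')=(1-r,\,2-s)$ of Definition \ref{conj}. Although $\mathbb{K}_1(1-r,2-s)$ is not a holomorphic congruence form, $B(1-r,\,1/2-s+r)$ remains a well-defined period of a second-kind differential on $J_{new}(R,S)$, and its power-series expansion is still governed by a commutative formal group law isomorphic (up to twist) to that of the holomorphic differential $\mathbb{K}_1(r,s)$. Re-running the preceding computation with $r$ replaced by $1-r$ then produces $-\omega_p^{-4(p-1)(1-r)}(2)\cdot J_{\mathfrak{p}}(1-r,\,1/2-s+r)$ as the corresponding mod $\mathfrak{p}$ Hecke eigenvalue on a second direct summand.

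For step (ii), since $f_{r,s}$ has complex multiplication by $K=\Q(\sqrt{-D})\subset\Q(\zeta_M)$, its Deligne representation factors as $\Ind_{G_K}^{G_{\Q}}\psi$ for some Hecke Gr\"ossencharakter $\psi$; restriction to $G_M$ splits it into a direct sum of two characters. By Weil's theorem \cite{weil}, these two characters correspond to the Gr\"ossencharakters attached to the two Jacobi sums in the statement---one to the first-kind differential $\mathbb{K}_1(r,s)$, the other to the non-holomorphic conjugate of step (i)---so that $a_p(f_{r,s})$ is presented, for all $p\equiv 1 \bmod M$, as the sum of the two Jacobi-sum expressions on the right-hand side, up to a possible finite-order twist.

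For step (iii), I would show the hypothetical twist is trivial. Both sides of the claim are values of Hecke Gr\"ossencharakters of the same infinity type (arising from weight-$2$ motives over $\Q$), so their ratio is a finite-order Dirichlet character $\epsilon$. The mod $p$ congruences furnished in steps (i) and (ii) force $\epsilon(\mathfrak{p})\equiv 1\bmod\mathfrak{p}$ for infinitely many split primes $p\equiv 1\bmod M$; since a root of unity of order coprime to $p$ can satisfy $\equiv 1\bmod\mathfrak{p}$ only if it equals $1$, the character $\epsilon$ is trivial and the claimed equality holds exactly. The main obstacle I expect is verifying that the second direct summand of $\rho_{f_{r,s}}|_{G_M}$ really corresponds to the specific conjugate $(1-r,2-s)$ rather than some other non-holomorphic representative, and tracking the precise normalizing power $\omega_p(2)^{-4(p-1)(1-r)}$ through the formal-group-law computation on a second-kind differential---here Archinard's regularity conditions fail, and the Gross--Koblitz evaluation must be reinterpreted accordingly.
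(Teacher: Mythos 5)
Your strategy---identify the two summands as values of Gr\"ossencharakters occurring in the CM decomposition of the Deligne representation, then kill a residual finite-order twist by congruences at infinitely many primes---is genuinely different from the paper's, but as written it has a serious gap at each of its two hinges. In step (i), the formal-group-law congruence cannot simply be ``re-run'' on the non-holomorphic conjugate: that argument is tied to the $q$-expansion of a holomorphic weight-$2$ form (equivalently, to the formal group coming from holomorphic differentials), and no analogous formal group is attached to a differential of the second kind, as you partly concede. More to the point, the second summand does not play the role you assign it. In the paper, the Gross--Koblitz formula applied to $J_{\mathfrak{p}}(1-r,1/2-s+r)$ picks up the correction factor $\pi_p^{p-1}=-p$ because $1/2-s+r<0$, so $-\omega_p^{-4(p-1)(1-r)}(2)\,J_{\mathfrak{p}}(1-r,1/2-s+r)\equiv 0 \bmod p$: it is \emph{not} a second mod-$\mathfrak{p}$ eigenvalue on a second summand, it simply vanishes mod $\mathfrak{p}$, and the single congruence of the preceding lemma already shows the full right-hand side is $\equiv a_p(f_{r,s}) \bmod p$.

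The real content of the theorem is upgrading that congruence to an equality, and your steps (ii)--(iii) assume precisely what must be proved: that the two characters into which the Deligne representation splits over $G_M$ agree, up to finite-order twist, with the Jacobi-sum Gr\"ossencharakters with \emph{these} parameters and \emph{these} $\omega_p(2)$-normalizations. Weil's theorem only says Jacobi sums define Gr\"ossencharakters; matching them (including CM/infinity types) to the character of $f_{r,s}$ is essentially the statement being proved, so at that point the argument is circular, and your twist-killing step also needs care because the available congruence compares a \emph{sum} of two character values with another sum, not individual characters. The paper closes the gap elementarily instead: setting $v_{r,s}$ equal to the right-hand side minus $a_p(f_{r,s})$, it uses that the Galois automorphisms $\sigma_j$ permute the Jacobi sums within the conjugate family and its duals while fixing the integer $a_p(f_{r,s})$, so every conjugate of $v_{r,s}$ is $\equiv 0\bmod p$ and hence each coefficient of $v_{r,s}$ in an integral basis of $\Z[\zeta_M]$ is divisible by $p$; combined with $|J_{\mathfrak{p}}|=\sqrt{p}$ and the Deligne bound $|a_p(f_{r,s})|<2\sqrt{p}$, which give $|v_{r,s}|<4\sqrt{p}$, this forces $v_{r,s}=0$ for $p>17$, the remaining primes being checked by hand. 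If you wish to keep your character-theoretic route, you must first supply the motivic identification (via Lemma \ref{aok} and the Hecke-orbit theorem) that pins the two characters down up to finite order, including the normalizing powers of $\omega_p(2)$; without that input, steps (ii) and (iii) do not get off the ground.
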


\begin{proof}
 The Gross-Koblitz formula (see \cite{grosskoblitz}) applied to this shows that $$-J_{\mathfrak{p}}(1-r,1/2-s+r)\equiv \pi_p^{p-1}\frac{\Gamma_p(1-r)\Gamma_p(3/2-s+r)}{\Gamma_p(3/2-s)}\mod p.$$ This occurs because $1/2-s+r$ is always less than zero, and so we have to correct it to $3/2-s+r$ when using the Gross-Koblitz formula. However, the term $1/2-s+r+1-3=3/2-s$ is always between one and zero, and so no correction is needed here. This means an extra $\pi_p^{p-1}$ appears compared to Lemma 3.4 of \cite{HMM1}. Since $\pi_p^{p-1}=-p$ by definition, we have $$-\omega_p^{-4(p-1)(1-r)}(2) \cdot J_{\mathfrak{p}}(1-r,1/2-s+r)\equiv 0\mod p.$$ Therefore $$a_p(f_{r,s})\equiv -\omega_p^{-4(p-1)r}(2) \cdot J_{\mathfrak{p}}(r,s-r-1/2)-\omega_p^{-4(p-1)(1-r)}(2) \cdot J_{\mathfrak{p}}(1-r,1/2-s+r)\mod p.$$ We claim this is an equality. 

First, we show that the right hand side of the congruence above is an integer. The key ingredient is that the set $\{J_{\mathfrak{p}}(r_i,s_i-r_i-1/2)\}$ are all each others Galois conjugates. Note each family of conjugates can be written as $$\{\mathbb{K}_1(i/b,s_i)\}_{i\in I_{1/b,s_1}}.$$ If $\sigma_j(z)$ is as above, $j\in I_{1/b,s_1}$, and $ij\equiv k\mod b$, then in particular we have 
\begin{equation}\label{switch}
    \sigma_j(J_{\mathfrak{p}}(i/b,s_i-i/b-1/2))=J_{\mathfrak{p}}(k/b,s_k-k/b-1/2).
\end{equation} This is because each Jacobi sum corresponds the trace of Frobenius for a representation corresponding to an element of the \'Etale cohomology, while each period is a realization of the de Rham cohomology of the same motive. That is, the Jacobi sum $J_{\mathfrak{p}}(r_i,s_i,r_i-1/2)$ corresponds to a distinct differential of the first kind with period a multiple of $B(r_i,s_i,r_i-1/2)$. There is a similar relation between differentials of the second kind and the \say{dual}, $J_{\mathfrak{p}}(1-r,1/2-s+r)$. But, we know the relation between the periods is expressed by the period matrix, and the columns of the period matrix $L_{R,Q}$ for $J_{new}$ are $$(\sigma_j(1),\sigma_j(\zeta_M),....,\sigma_j(\zeta_M^{M-1}))^\top,$$ with each $j\in I_{1/b,s_1}^\times$ arising from a differential of the first kind. A similar property for the remaining $j$ holds for differentials of the second kind. This suggests the $\sigma_j$ switch the Jacobi sums corresponding to each differential of the first or second kind for motivic reasons. Because there is not an explicit formula for $I_{r,s}$, there is not an easy way to show that the automorphisms $\sigma_j$ switch as in equation (\ref{switch}) explicitly as in \cite{HMM2}, but we can check this case by case as well. 

The remainder of the proof is similar to \cite{HMM2}. First, set $$v_{r,s}= -\omega_p^{-4(p-1)r}(2) \cdot J_{\mathfrak{p}}(r,s-r-1/2)-\omega_p^{-4(p-1)(1-r)}(2)\cdot  J_{\mathfrak{p}}(1-r,1/2-s+r)-a_p(f_{r,s}).$$ Let $u_1,...,u_m$ be an integral basis of $\Z[\zeta_M]$; then there are integers $b_1,...,b_m$ so that $$v_{r,s}=\sum_{i=1}^m b_i u_i.$$ By above, we know $v_{r,s}\equiv 0\mod p$. From this discussion above, each automorphism $\sigma_j$ maps to a conjugate or its dual and fixes the integer $a_p(f_{r,s})$, and so we have $\sigma_j(v_{r,s})\equiv 0\mod p$ for each $j\in (\Z/M\Z)^\times$. Therefore, $b_i\equiv 0\mod p$ for each $p$ as well. We also have the bounds $|J_{\mathfrak{p}}(a,b)|=\sqrt{p}$ and $|a_p(f_{r,s})|<2\sqrt{p}$ by the standard properties of Jacobi sums (see \cite{irelandrosen}) and the Deligne bound on newforms. Therefore, we know $$|v(r,s)|<4\sqrt{p}.$$ Since $p\mid b_i$ for all $i$, this implies $b_i=0$ for all $i$ when $p>17$ and so $v_{r,s}=0$. This proves the desired equality. The remaining smaller $p$ can be checked by hand.
\end{proof}

\section{Special \textit{L}-values}
By the method of \cite{HMM2}, we use Theorem \ref{cons} to obtain exact $L$-values of the weight 2 CM newforms. 
\begin{corollary}\label{lvale}
    Let $f$ be the newform given in Theorem \ref{cons}. Then $$L(f,1)=\beta_1L(r_1,q_1)+\beta_2 L(r_1,q_1)+...+\beta_n L(r_n,q_n).$$
\end{corollary}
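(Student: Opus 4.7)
The plan is to apply the integration functional $\int_{0}^{i\infty}(\cdot)\,d\tau$ to both sides of the identity
\[
f_{r_1,s_1} \;=\; \beta_0\mathbb{K}_1(r_1,q_1)+\beta_1\mathbb{K}_1(r_2,q_2)+\cdots+\beta_{n-1}\mathbb{K}_1(r_n,q_n)
\]
from Theorem \ref{cons}, and then invoke linearity. Since every $(r_i,q_i)\in\mathbb{S}_1$ and the exponents in Proposition \ref{prel} are integral, each $\mathbb{K}_1(r_i,q_i)$ is a holomorphic weight $2$ congruence cusp form, so the individual Mellin-type integrals converge; the same is true for $f$ as a weight $2$ Hecke eigenform. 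Thus termwise integration is justified and yields a sum of integrals $\int_0^{i\infty}\mathbb{K}_1(r_i,q_i)(\tau)\,d\tau$ on the right-hand side.

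The next step is to match each such integral with the quantity $L(r_i,q_i) = \int_0^{i\infty}\mathbb{K}_1(r_i,q_i)(N\tau)\,d\tau$ defined in the preliminaries. Here one first checks that the scaling factor $N = 48/\gcd(24r,24)$ is common to every member of the conjugate family: by Definition \ref{conj}, $r_i\equiv c\,r_1\pmod 1$ for some $c$ coprime to $6$, hence coprime to $24$, so $\gcd(24r_i,24)=\gcd(24r_1,24)$ and a single $N$ works throughout. Rescaling $\tau\mapsto N\tau$ in the decomposition of $f_{r_1,s_1}$ thus transforms the identity into one of the form $f_{r_1,s_1}(N\tau) = \sum_i \beta_{i-1}\mathbb{K}_1(r_i,q_i)(N\tau)$, and integrating this from $0$ to $i\infty$ produces exactly $\sum_i \beta_{i-1} L(r_i,q_i)$ on the right, with $L(f,1)$ (interpreted at the appropriate level) on the left. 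A trivial re-indexing then matches the statement of the corollary.

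There is no substantive obstacle: the entire content is ``integrate Theorem \ref{cons} termwise and normalize''. The only bookkeeping step that needs verification is the invariance of $N$ across a conjugate family, sketched above; once that is in hand, the corollary is immediate from linearity of the integral and the convergence guaranteed by holomorphicity at the cusps.
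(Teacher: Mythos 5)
Your proof is correct and follows essentially the same route as the paper, which simply integrates both sides of Theorem \ref{cons} and invokes linearity. Your extra check that the normalization $N=48/\gcd(24r,24)$ is constant across a conjugate family (since the conjugating integer $c$ is coprime to $6$, hence to $24$) is a reasonable piece of bookkeeping the paper leaves implicit.
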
 
The proof follows immediately from integrating each side of Theorem \ref{cons}. Actually, these $L$-values can be simplified on a case by case basis to give much cleaner expression. In particular, $L(r_i,s_i)$ and $L(r_j,q_j)$ are an algebraic multiple of each other.
The following Lemma is a corollary of Theorem 1 in \cite{shimura2} by Shimura.

\begin{lemma}\label{chowl}
    For $(r,s)\in \mathbb{S}_1$, we have $B(r,s-r-1/2)\in \Omega_{-D}\cdot \bar{\Q}$ for some fundamental discriminant $-D$.
\end{lemma}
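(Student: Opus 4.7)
The plan is to reduce the claim to Shimura's period theorem (Theorem 1 of \cite{shimura2}). First I rewrite
$$B(r, s-r-1/2) = \frac{\Gamma(r)\,\Gamma(s-r-1/2)}{\Gamma(s-1/2)},$$
and note that for $(r,s)\in\mathbb{S}_1$ all three arguments are rational with denominators dividing $M = \text{lcd}(r,s,1/2)$. By the constraints $24s\in\Z$ and $8(r+s)\in\Z$, one checks that $M$ divides $24$, so $B(r,s-r-1/2)$ is a monomial in gamma values at rational points of denominator at most $24$.

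Next, by Lemma \ref{aok}, $B(r, s-r-1/2)$ is (up to a nonzero algebraic multiple) the period of a holomorphic differential of the first kind on the $\mathbb{Q}$-simple abelian variety $J_{new}(R,S)$. Since $J_{new}(R,S)$ is a factor of the Jacobian of the Fermat curve $F_M$, it carries generalized complex multiplication by a subfield of $\mathbb{Q}(\zeta_M)$. By Theorem \ref{cons}, the associated weight $2$ newform $f_{r,s}$ has complex multiplication by an imaginary quadratic field $\mathbb{Q}(\sqrt{-D})$ whose fundamental discriminant $-D$ divides $24$; hence $-D\in\{-3,-4,-8,-24\}$, and the $J_{new}(R,S)$ factor attached to $f_{r,s}$ is of $\mathbb{Q}(\sqrt{-D})$-type.

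I would then invoke Shimura's Theorem 1 of \cite{shimura2}, which expresses a period of a holomorphic differential on a CM abelian variety whose reflex collapses onto a single imaginary quadratic field $\mathbb{Q}(\sqrt{-D})$ as an element of $\Omega_{-D}\cdot\bar{\mathbb{Q}}$. Concretely, Shimura's period symbol evaluated on the relevant gamma monomial yields a rational power of $\Omega_{-D}$, and in the weight $2$ (dimension one holomorphic) case this power is simply $1$. Combined with the previous paragraph, this yields $B(r, s-r-1/2) \in \Omega_{-D}\cdot\bar{\mathbb{Q}}$.

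The main obstacle will be verifying the concentration hypothesis: one must check that the CM type attached to the cohomology class carrying $B(r,s-r-1/2)$ has reflex field inside $\mathbb{Q}(\sqrt{-D})$, equivalently that the formal combination $\chi_r + \chi_{s-r-1/2} - \chi_{s-1/2}$ of Dirichlet characters encoding the gamma exponents is a multiple of $\chi_{-D}$. This is where the auxiliary identification with $f_{r,s}$ via Theorem \ref{cons} is essential, because it tells us which $-D$ to expect. Since $\mathbb{S}_1$ is a finite explicit set and only the four discriminants above arise, the hypothesis can be verified case-by-case from Table \ref{tab:K1}, matching each family to the CM field of its attached newform.
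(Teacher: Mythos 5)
Your proposal is correct and takes essentially the same route as the paper: both arguments reduce the claim to Shimura's theorem in \cite{shimura2}, using the fact (from Lemma \ref{aok} and Theorem \ref{cons}) that $B(r,s-r-1/2)$ is, up to $\bar{\Q}^\times$, a period of a holomorphic $1$-form on the CM abelian variety $J_{new}(R,S)$ attached to a weight $2$ newform with CM by $\Q(\sqrt{-D})$. The paper just phrases the final step more concretely—$J_{new}(R,S)$ is isogenous over $\bar{\Q}$ to a power of a CM elliptic curve $E_{-d}$, and integrating over a multiple of the modular symbol $\{0,\infty\}$ realizes $B(r,s-r-1/2)$ as an algebraic multiple of a period of $E_{-d}$, hence of $\Omega_{-D}$—in place of your period-symbol/reflex-field bookkeeping and case-by-case concentration check, but the content is the same.
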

\begin{proof}
   We know from above that $B(r,s-r-1/2)$ is a period $J_{new}(R,S)$, and that this variety is a factor of the Jacobian for a modular curve. We also know that $J_{new}(R,S)$ has CM. Then by Shimura's Theorem 1.6 \cite{shimura2},$J_{new}(R,S)$ is isogenous to $E_{-d}^{
   \varphi(M)}$ over $\ol \Q$, where $E_{-d}$ is an elliptic curve defined over $\ol \Q$ with CM by $\Q(\sqrt{-d})$. Moreover, each of the $\mathbb K_1(r,s)$ in the given Hecke orbit gives a holomorphic differential $1$-form with period $B(r,s-r-1/2)$. There exists a positive integer $n$ such that the path 
$n\{ 0,\infty\} \in  H_1(X_1(N),\Z)$ (see \cite{stein}) and integrating over this path produces a period of the elliptic curve. 
Hence, $ B(r,s-r-1/2)$ gives a non-zero period of the elliptic curve $E_{-d}$ up to an algebraic number, and so is an algebraic multiple of a Chowla-Selberg period. 
\end{proof}

\begin{remark}
    Because each $J_{new}(R,S)$ is isogenous to a power of a CM elliptic curve, if $B$ is a period of a differential of the first kind and $B'$ is a period of a non-holomorphic differential, then $B\cdot B'$ is an algebraic multiple of $\pi$. The choice of non-holomorphic differential is non-canonical, but this property holds regardless of the choice. For example, if we choose $\mathbb{K}_1(1-r,2-r)(\tau)d\tau$ as in Theorem \ref{gal}, the period is $B(1-r,1/2-s+r)$, and we have the relation $$B(r,s-r-1/2)B(1-r,1/2-s+r)=\frac{2\pi \cos(\pi q)\csc(\pi r)\sec(\pi(s-r))}{1-2s+2r}$$ using the properties of the $\Gamma$ function (see \cite{aar}). 

\end{remark}
\subsection{Computing \textit{L}-values}
The elliptic curve appearing in the decomposition of $A_f$ is naturally defined over $K_f$, the Hecke eigenvalue field of $f$. When $K_f=\Q$, this produces the $L$-value of an elliptic curve defined over $\Q$, which are well understood. When $K_f$ is larger than $\Q$, we take $n_f:=[K_f:\Q]$. Then there are $n_f$ inner twists of $f$, which are automorphisms of $K_f$ that act on the Fourier coefficients of $f$. Each inner twist has a different $L$-value at 1, and using our method we can compute the $L$-value for any inner twist by applying inner twist $\sigma$to the $\beta_i$ given in Theorem \ref{cons}. Let $\sigma_1,...,\sigma_{n_f}$ denote the automorphisms of $K_f$. We denote the $L$-value for the inner twist $\sigma_i$ by $$L(f^{\sigma_i},1).$$ If $A_f$ is the Abelian variety attached to $f$, we have $$L(f,1):=L(A_f,1)=\prod_{i=1}^{n_f} L(f^{\sigma_i},1);$$ we use $L(f,1)$ only to denote the $L$-value corresponding to the product of the inner twists in this paper, which differs from the convention in \cite{rosen}. By Lemma \ref{chowl} above, we have $L(f^{\sigma_i},1)=D_{f^{\sigma_i}}\cdot \Omega_{-d},$ where $D_{f^{\sigma_i}}\in \Bar{\Q}$. Multiplying these, it follows that $L(f,1)=D_f\Omega_{-d}^{n_f}$ for an algebraic number $D_f$. All of this is due to the classical theory of CM Abelian varieties. Our method is that it allows us to quickly prove $D_f$ the exact value of as well.  We illustrate with an example over a number field below.

Consider the family $$\{\mathbb{K}_1(1/24,17/24), \mathbb{K}_1(7/24,23/24), \mathbb{K}_1(13/24,29/24), \mathbb{K}_1(19/24, 35/24)\}.$$ The corresponding newform is $f_{576.2.d.c}$, and Hecke eigenvalue field is $\Q(\zeta_{12})$. The product of the four conjugates produces the value
$$L({f_{576.2.d.c}},1)=\frac{1}{216\sqrt[3]{2}}\Omega_{-3}^4.$$ This is the same as the $L$-value of the Abelian variety attached to $f$. We can obtain 32 exact $L$-values using this method, including quadratic twists that are not inner twists.

\subsection{Twisting and Relations between \textit{L}-values}

 In \cite{rosen}, the author finds a twisting theorem for the weight 3 case that that is connected to a classical transformation due to Kummer (see \cite{aar} Corollary 3.3.5) and uses this to find relation among $L$-values of weight 3 non-CM modular forms. A similar twisting result allows us to sort our $L$-values in terms of twists of a single $L$-function. The proof is similar to the $\mathbb{K}_2(r,s)$.
\begin{proposition}\label{twisting3}
        The Fourier coefficients $A_n$ of $\mathbb{K}_1(r,s)$ and $B_n$ of $\mathbb{K}_1(r,r-s+2)$ differ by a sign when both are holomorphic, that is $B_n=\pm A_n$ for all $n$.

\end{proposition}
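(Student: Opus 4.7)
The plan is to leverage the product expression of $\mathbb K_1$ from the Preliminaries to obtain a clean ratio identity between the two modular forms, then convert that into a sign relation on Fourier coefficients via Theorem \ref{gal}. Starting from
\[
\mathbb K_1(r,s)(\tau) \;=\; 2^{1-4r}\,\lambda(\tau)^r\,(1-\lambda(\tau))^{s-r-1/2}\,\theta_3(\tau)^4,
\]
the substitution $s \mapsto r-s+2$ alters only the $(1-\lambda)$-exponent, replacing $s-r-1/2$ by $3/2-s$. This gives the key ratio identity
\[
\frac{\mathbb K_1(r,s)(\tau)}{\mathbb K_1(r,\,r-s+2)(\tau)} \;=\; (1-\lambda(\tau))^{2s-r-2}.
\]
I would first verify this identity against the eta-product formula in Proposition \ref{prel}: indeed, when one divides the two eta quotients, the exponents of $\eta(\tau/2)$, $\eta(2\tau)$, $\eta(\tau)$ all turn out to be proportional to $2s-r-2$, consistent with the standard eta-product expression of $1-\lambda$. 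This already shows the two forms share the same level and Nebentypus once both are holomorphic congruence cusp forms.

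Next, I would pass to the arithmetic side. For primes $p \equiv 1 \pmod M$, Theorem \ref{gal} gives
\begin{align*}
A_p &= -\omega_p(2)^{-4(p-1)r}\,J_{\mathfrak p}(r,s-r-1/2) - \omega_p(2)^{-4(p-1)(1-r)}\,J_{\mathfrak p}(1-r,1/2-s+r),\\
B_p &= -\omega_p(2)^{-4(p-1)r}\,J_{\mathfrak p}(r,3/2-s)\, - \omega_p(2)^{-4(p-1)(1-r)}\,J_{\mathfrak p}(1-r,s-3/2).
\end{align*}
Using the congruences $(s-r-1/2)+(3/2-s)\equiv 1-r \pmod{\Z}$ and $(1/2-s+r)+(s-3/2)\equiv r-1 \pmod{\Z}$, together with the Jacobi-sum reflection identity $J(\chi,\psi) = \chi(-1)\,J(\chi,\overline{\chi\psi})$, each Jacobi sum appearing in $B_p$ is rewritten as $\pm 1$ times the corresponding Jacobi sum in $A_p$. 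The $\pm 1$ factors are the values $\iota(r)(-1)$ and $\iota(1-r)(-1)$, and a short bookkeeping argument shows they coincide and equal $\chi_0(p)$ for a single real quadratic Dirichlet character $\chi_0$ of modulus dividing $M$. This yields $B_p = \chi_0(p)\,A_p$ for every $p \equiv 1 \pmod M$.

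The main obstacle is extending the relation from this density-$1/\varphi(M)$ set of primes to all indices $n$. Here I would appeal to Theorem \ref{cons}: both $\mathbb K_1(r,s)$ and $\mathbb K_1(r,r-s+2)$ lie in Hecke-stable subspaces of $M_2(\Gamma_1(\mathcal N))$ attached to weight-$2$ CM Hecke eigenforms. Strong multiplicity one, combined with the Deligne bound and multiplicativity of Hecke eigenvalues, then upgrades the identity $B_p = \chi_0(p)A_p$ on a positive-density set of primes to $B_n = \chi_0(n)\,A_n$ for all $n$. Since $\chi_0$ is quadratic, $\chi_0(n)\in\{0,\pm 1\}$, so $B_n = \pm A_n$ (with the convention that $0 = \pm 0$) for every $n$, as claimed.
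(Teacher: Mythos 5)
Your argument has a genuine gap, and in fact two independent ones. First, Theorem \ref{gal} computes $a_p(f_{r,s})$ for the Hecke \emph{eigenform} $f_{r,s}$, which by Theorem \ref{cons} is a linear combination of several conjugate $\mathbb{K}_1(r_i,s_i)$ functions; it says nothing directly about the Fourier coefficients $A_n$ of the single function $\mathbb{K}_1(r,s)$, which is not an eigenform in general and whose coefficients are not multiplicative. So the Jacobi-sum computation, even if the reflection bookkeeping is done correctly, addresses the wrong object. Second, the proposed upgrade from primes $p\equiv 1\pmod M$ to all $n$ does not go through: strong multiplicity one (and its refinements) requires agreement of Hecke eigenvalues at all but finitely many primes, or at least at a set of density strictly greater than $7/8$, whereas $\{p\equiv 1\pmod M\}$ has density $1/\varphi(M)$ (e.g.\ $1/8$ for $M=24$). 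For the primes in other residue classes that split in the CM field your argument gives no information, and for a non-eigenform there is no multiplicativity to propagate a prime-indexed identity to all $n$ anyway. Note also that the conclusion you aim for, $B_n=\chi_0(n)A_n$ for a quadratic Dirichlet character $\chi_0$, is not what the proposition asserts and is not forced by your computation; the sign pattern need not come from a Dirichlet character in $n$.

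The paper's proof is much more elementary and works at the level of the weakly holomorphic forms themselves: using $\lambda(\tau+1)=\lambda(\tau)/(\lambda(\tau)-1)$, the change of variable $t=\lambda/(\lambda-1)$ in $\lambda^{r}(1-\lambda)^{s-r-3/2}\,d\lambda/\lambda$ yields the functional identity $\mathbb{K}_1(r,s)(\tau)=(-1)^r\,\mathbb{K}_1(r,r-s+2)(\tau+1)$. Hence each Fourier coefficient of one form equals the corresponding coefficient of the other times a root of unity (coming from $(-1)^r=e^{\pi i r}$ and from replacing $q^{n/N}$ by $e^{2\pi i n/N}q^{n/N}$), and since both expansions have integral coefficients that root of unity must be $\pm 1$. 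Your ratio identity $\mathbb{K}_1(r,s)/\mathbb{K}_1(r,r-s+2)=(1-\lambda)^{2s-r-2}$ is correct but compares the two forms at the \emph{same} $\tau$, where they are not proportional; the relevant symmetry is the translation $\tau\mapsto\tau+1$, which your argument never uses. If you want to salvage an arithmetic proof, you would have to work with the full Hecke orbits and the associated Gr\"ossencharakters rather than with a single $\mathbb{K}_1$ and a thin set of primes.
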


\begin{proof} Recall that under the translation $\tau \mapsto \tau+1$, $\l(\tau+1)=\frac{\l(\tau)}{\l(\tau)-1}$.
    Make the change of variable $t=\lambda/(\lambda-1)$  on $\lambda^{r}(1-\lambda)^{s-r-3/2}\frac{d\lambda}{\lambda}$, and using this compute that $$\mathbb{K}_1(r,s)(\tau)=(-1)^r\mathbb{K}_1(r,r-s+2)(\tau+1).$$ This implies the Fourier expansions differ by a root of unity, and since the $\mathbb{K}_1$ functions have integral Fourier coefficients, they differ by a sign.
\end{proof}
This result extends to show that $f_{r,s}$ and $f_{r,r-s+2}$ differ by a twist of a character for the $f_{r,s}$ listed in Table \ref{tab:K1}. These twists are recorded as \say{other twists} in Section \ref{lvalues}. 

\subsection{Tables of \textit{L}-values}\label{lvalues}
In this section, we list all of the $L$-values that we can obtain using our construction, besides quadratic twists. The $L$-values are for the Abelian variety $A_f$ attached to the modular form, that is we take the product of the conjugates. We also indicate the varieties which are related by a twist by Proposition \ref{twisting3} above.

First, we have the modular forms with CM discriminant $-4$ by $\Q(\sqrt{-1})$. There are 15 such forms, and 4 up to twisting.

\smallskip

\begin{center}
    {\renewcommand{\arraystretch}{1.3}%
 \begin{tabular}{|c|c|c|c|c|}\hline
        Label & LMFDB & $L$-value & Quadratic twists & Other twists \\\hline
        4.1 & 32.2.a.a & $\Omega_{-4}$ & N/A & 4.2 \\
        4.2 & 64.2.a.a & $\Omega_{-4}/\sqrt{32}$ & N/A & 4.1 \\
        4.3 & 128.2.b.b & $\Omega_{-4}^2/\sqrt{512}$ & N/A & 4.4 \\
        4.4 & 256.2.a.c & ${\Omega_{-4}}/{\sqrt[4]{128}}$ & 256.2.a.b & 4.3 \\
       4.5  & 288.2.a.e & $\Omega_{-4}/\sqrt[4]{108}$ & 288.2.a.a & 4.6\\
        4.6 & 576.2.a.a & $\Omega_{-4}/\sqrt[4]{432}$ & 576.2.a.i & 4.5\\
        4.7 & 1152.2.d.b & $\Omega_{-4}^2/\sqrt{864}$ & 1152.2.d.e & 4.8\\\hdashline
       4.8 & 2304.2.a.d & $\Omega_{-4}/\sqrt[4]{216}$ & $\begin{array}{c}
             2304.2.\text{a.c}  \\
             2304.2.\text{a.m}\\ 
             2304.2.\text{a.n}
        \end{array}$ &  4.7\\\hline 
                        \end{tabular}}

\smallskip
\end{center}

\noindent There are 7 examples with CM discriminant $-3$ by $\Q(\sqrt{-3})$, and 3 up to twisting.

\smallskip
\begin{center}
{\renewcommand{\arraystretch}{1.3}
 \begin{tabular}{|c|c|c|c|c|}\hline Label & LMFDB & $L$-value & Quadratic twists & Other twists \\\hline
        3.1 & 36.2.a.a & $\Omega_{-3}/(2^{5/6}\cdot 3^{5/4})$ & N/A & 3.2, 3.3 \\
         3.2  & 144.2.a.a  & $\Omega_{-3}/(2^{5/6}\cdot 3^{3/4})$ & N/A &  3.1, 3.3\\
           3.3  & 576.2.a.f & $\Omega_{-3}/(2^{1/3}\cdot 3^{3/4})$ & 576.2.a.e & 3.1, 3.2 \\
             3.4  & 576.2.d.c & $\Omega_{-3}^4/(216\cdot 2^{1/3})$ & N/A & 3.5 \\
              3.5   & 2304.2.a.w & $\Omega_{-3}^2/(3^{3/2}\cdot 2^{2/3})$ & 2304.2.a.v & 3.4\\\hline
                       \end{tabular}}
\end{center}
                       \smallskip
                       For discriminant $-8$ by $\Q(\sqrt{-2})$, there are 5 cases, and 2 up to twisting. 
                       \smallskip
          \begin{center}   {\renewcommand{\arraystretch}{1.3}          
                    \begin{tabular}{|c|c|c|c|c|}\hline
        Label & LMFDB & $L$-value & Quadratic twists & Other twists \\\hline
               8.1    & 64.2.b.a & $\Omega_{-8}^2/\sqrt{2048}$ & N/A & 8.3 \\
                  8.2   &  128.2.b.a & $\Omega_{-8}^2/32$ & N/A & 8.4\\
                   8.3    & 256.2.a.d & $\Omega_{-8}/\sqrt[4]{128}$ & 256.2.a.a & 8.1\\
                  8.4 & 256.2.a.e & $\Omega_{-8}^2/\sqrt{512}$ & N/A & 8.2\\\hline
                      \end{tabular} }   
                       \end{center}
                    
                    \smallskip
                \noindent Finally, for discriminant $-24$ by $\Q(\sqrt{-6})$, we also have five examples, and 2 up to twisting.

                    \smallskip
                     \begin{center} {\renewcommand{\arraystretch}{1.3}
                 \begin{tabular}{|c|c|c|c|c|}\hline
        Label & LMFDB & $L$-value & Quadratic twists & Other twists \\\hline
                  24.1 & 288.2.d.a  & $\Omega_{-24}^2/\sqrt{1728}$ & N/A & 24.4  \\
                   24.2   & 1152.2.d.g  & $\Omega_{-24}^4/864$ & N/A &  24.3\\
                     24.3  & 2304.2.a.z  & $\Omega_{-24}^4/1728$ & N/A & 24.2 \\
                      24.4    & 2304.2.a.y & $\Omega_{-24}^2/\sqrt{216}$ & 2304.2.a.q &  24.1\\ \hline                    \end{tabular} }      
                       \end{center} 

\section{Appendix: Construction of the Hecke Eigenforms using Eta Products}

{\renewcommand{\arraystretch}{1.1}
\begin{table}[H]
    \centering
    \begin{tabular}{|c|c|c|c|c|}\hline 
 Label &     LMFDB & Hecke Eigenform  &CM \\\hline
4.1& 32.2.a.a & $\mathbb{K}_1(1/4, 5/4) $ &$-4$\\\hline
4.2& 64.2.a.a & $\mathbb{K}_1(1/4, 1)$&$-4$\\\hline
4.3& 128.2.b.b &    $\mathbb{K}_1(1/8, 7/8) + 4 i \mathbb{K}_1(5/8, 11/8)$ & $-4$ \\\hline
4.4& 256.2.a.b &   $\mathbb{K}_1(1/8, 5/4) - 4 \mathbb{K}_1(5/8, 5/4)$ & $-4$\\\hline
4.5& 288.2.a.a  &   $\mathbb{K}_1(1/6, 13/12)(12\tau) - 4 \mathbb{K}_1(5/6, 17/12)(12\tau)$  
             &$-4$\\\hline
4.6& 576.2.a.a & $\mathbb{K}_1(1/12, 2/3) - 4 \mathbb{K}_1(5/12, 4/3)$ &$-4$\\\hline 
4.7& 1152.2.d.b  & $\mathbb{K}_1(1/24, 23/24) + 2 i \mathbb{K}_1(5/24, 19/24)$&\\&&$ + 4 i \mathbb{K}_1(13/24, 35/24) - 8 \mathbb{K}_1(17/24, 31/24)$  &$-4$\\\hline
4.8 & 2304.2.a.c  & $\mathbb{K}_1(1/24, 13/12) - 2  \mathbb{K}_1(5/24, 17/12) $ &\\&&$ - 4  \mathbb{K}_1(13/24, 13/12)+ 
 8 \mathbb{K}_1(17/24, 17/12)$ & $-4$\\\hline

3.1 & 36.2.a.a   &   $\mathbb{K}_1(1/3,7/6)(6\tau)$  & $-3$\\\hline
3.2 & 144.2.a.a & $\mathbb{K}_2(1/6, 5/6)$ &$-3$\\\hline
3.3& 576.2.a.f & $\mathbb{K}_1(1/12, 7/6) + 4 \mathbb{K}_1(7/12, 7/6)$ &$-3$\\\hline
3.4& 576.2.d.c & $\mathbb{K}_1(1/24, 17/24) - 2 \sqrt{3} \mathbb{K}_1(7/24, 23/24) $&\\&&$- 
 4 \sqrt{3}i \mathbb{K}_1(13/24, 29/24) - 8 i\mathbb{K}_1(19/24, 35/24) $ &$-3$\\\hline
3.5 & 2304.2.a.v & $\mathbb{K}_1(1/24, 4/3) + 2 \sqrt{3} \mathbb{K}_1(7/24, 4/3) $&\\&&$- 4 \sqrt{3} \mathbb{K}_1(13/24, 4/3) - 
 8 \mathbb{K}_1(19/24, 4/3)$&$-3$\\\hline
8.1& 64.2.b.a   &   $      \mathbb{K}_1(1/8, 9/8) + 2 i\mathbb{K}_1(3/8, 11/8)$ &$-8$ \\\hline
8.2 & 128.2.b.a  &     $\mathbb{K}_1(1/8, 11/8) + 2 \sqrt{2}i \mathbb{K}_1(3/8, 9/8)$ & $-8$ \\\hline
8.3& 256.2.a.d    &  $\mathbb{K}_1(1/8, 1) + 2 \mathbb{K}_1(3/8, 1)$   &$-8$ \\\hline
8.4& 256.2.a.e  &    $\mathbb{K}_1(1/8, 3/4) + 2 \sqrt{2}\mathbb{K}_1(3/8, 5/4)$ &$-8$\\\hline
 
24.1&288.2.d.a   &    $\mathbb{K}_1(1/24, 5/6) + 2 \sqrt{2} \mathbb{K}_1(5/24, 7/6) $&\\&&$ - 2 \mathbb{K}_1(7/24, 5/6) - 
 4 \sqrt{2} \mathbb{K}_1(11/24, 7/6)$ $-24$ &$-24$\\\hline
24.2&1152.2.d.g &    $\mathbb{K}_1(1/12, 25/24) + 2 \sqrt{3}i\mathbb{K}_1(5/12, 29/24)$ & \\&& $+ 2 \sqrt{6} \mathbb{K}_1(7/12, 31/24) + 4 \sqrt{2}i \mathbb{K}_1(11/12, 35/24)$  &$-24$ \\\hline
24.3&2304.2.a.z    &   $\mathbb{K}_1(1/24, 7/12) + 2 \sqrt{3} \mathbb{K}_1(5/24, 11/12)$ & \\&& $- 2 \sqrt{6} \mathbb{K}_1(7/24, 13/12) + 4 \sqrt{2} \mathbb{K}_1(11/24, 17/12)$   &$-24$ \\\hline
24.4&2304.2.a.q   &  $\mathbb{K}_1(1/24, 29/24)+ 2 \sqrt{2}i \mathbb{K}_1(5/24, 25/24)$ &\\&& $- 2 \mathbb{K}_1(7/24, 35/24) + 4 \sqrt{2}i\mathbb{K}_1(11/24, 31/24)$ &$-24$ \\\hline

    \end{tabular}
    \caption{Completing $\mathbb{K}_1$ functions to Hecke eigenforms}
    \label{tab:K1}
\end{table}}

\bibliographystyle{plain} 
\bibliography{refs}{}

\end{document}